\newtheorem{dfn}{Definition }[section]
\newtheorem{thm}[dfn]{Theorem}
\newlength{\fixboxwidth}
\newtheorem{lem}[dfn]{Lemma}
\theoremstyle{remark}
\newtheorem{rem}[dfn]{Remark}
\newcommand{\mgk}{\mathcal{W}^\alpha_{\alpha_1,\alpha_2}(\Rn)}
\newcommand{\cpu}{c_\infty(1/p,1/u)}
\newcommand{\R}{{\mathbb{R}}}
\newcommand{\Rn}{{\mathbb{R}^n}}
\newcommand{\N}{\mathbb{N}}
\newcommand{\Z}{\mathbb{Z}}
\newcommand{\calP}{\mathcal{P}(\Rn)}
\newcommand{\Plog}{\mathcal{P}^{\log}(\Rn)}
\newcommand{\q}{{q(\cdot)}}
\newcommand{\p}{{p(\cdot)}}
\renewcommand{\u}{{u(\cdot)}}
\newcommand{\SSn}{\mathcal{S}'(\Rn)}
\newcommand{\Sn}{\mathcal{S}(\Rn)}
\newcommand{\Lp}{L_\p(\Rn)}
\newcommand{\Mup}{M_{p(\cdot)}^{u(\cdot)}(\Rn)}
\newcommand{\Muplq}{M_{p(\cdot)}^{u(\cdot)}(\ell_\q)}
\newcommand{\vek}[1]{\boldsymbol{#1}}
\newcommand{\norm}[2]{\left\|\left.{#1}\right|{#2}\right\|}
\newcommand{\Mufwpqx}{\mathcal{E}^{\vek{w},u(\cdot)}_{\p,\q}(\Rn)}
\newcommand{\Fwpxqx}{F^{\vek{w}}_{\p,\q}(\Rn)}
\newcommand{\supp}{\operatorname{supp}}
\begin{document}

\title{Variable exponent Triebel-Lizorkin-Morrey spaces}

\author[A. Caetano]{Ant\'{o}nio Caetano}
\address{Center for R\&D in Mathematics and Applications, Department of Mathematics, University of Aveiro, 3810-193 Aveiro, Portugal}
\email{acaetano@ua.pt}

\author[H. Kempka]{Henning Kempka}
\address{Department of Fundamental Sciences, PF 100314, University of Applied Sciences Jena, 07703 Jena, Germany}
\email{henning.kempka@eah-jena.de}

\thanks{This research was partially supported by the project \emph{Smoothness Morrey spaces with variable exponents} approved under the agreement Projektbezogener Personenaustausch mit Portugal -- A{\c{c}}{\~o}es Integradas Luso-Alem{\~a}s' / DAAD-CRUP. It was also supported through CIDMA (Center for Research and Development in Mathematics and Applications) and FCT (Foundation for Science and Technology) within project UID/MAT/04106/2019.}
\thanks{\copyright 2019. Licensed under the CC BY-NC-ND 4.0 license http://creativecommons.org/licenses/by-nc-nd/4.0/}

\date{\today}

\subjclass[2010]{46E35, 46E30, 42B25}

\keywords{Variable exponents, Triebel-Lizorkin-Morrey spaces, convolution inequalities}

\begin{abstract}
We introduce variable exponent versions of Morreyfied Triebel-Lizorkin spaces. To that end, we prove an important convolution inequality which is a replacement for the Hardy-Littlewood maximal inequality in the fully variable setting.\\Using it we obtain characterizations by means of Peetre maximal functions and use them to show the independence of the introduced spaces from the admissible system used.
\end{abstract}

\maketitle

\section{Introduction}
In this paper we introduce and present some properties of spaces which mix two recent trends in the literature: starting from the Besov and Triebel-Lizorkin spaces $B^s_{p,q}(\Rn)$ and $F^s_{p,q}(\Rn)$,
\renewcommand{\theenumi}{\Roman{enumi}}%
\begin{enumerate}
	\item on one hand, one \emph{Morreyfies} them in some way, that is, replace the $L_p(\Rn)$ spaces in their construction by Morrey spaces $M^u_p(\Rn)$;
	\item on the other hand, one makes the parameters $s$, $p$, $q$ and $u$ variable.
\end{enumerate}
The first trend started with Kozono and Yamazaki \cite{KY94} for the Besov scale in connection with the study of Navier-Stokes equations, giving rise to spaces which we denote by ${\mathcal N}^{s,u}_{p,q}(\Rn)$. The corresponding Triebel-Lizorkin scale with spaces denoted here by ${\mathcal E}^{s,u}_{p,q}(\Rn)$ was introduced by Tang and Xu \cite{TangXu}, with Sobolev-Morrey spaces being already considered by S. Campanato \cite{Cam63} as early as 1963. Related so-called Besov-type and Triebel-Lizorkin-type spaces $B^{s,\tau}_{p,q}(\Rn)$ and $F^{s,\tau}_{p,q}(\Rn)$ were introduced by El Baraka \cite{ElBar02, ElBar06}, at least in the Banach case, and Yang and Yuan \cite{YangYuan08,YangYuan10}, at least in the homogeneous case. For more details on these spaces and corresponding literature see \cite{YSY10}, the surveys \cite{Sic12,Sic13} and also \cite{Rosenthal}. Be aware, as stressed in \cite{Sic12,Sic13}, that the order in which the parameters show in the notation for the $\mathcal N$ and $\mathcal E$ spaces above can change with the authors. Also, usually only the smoothness $s$ is written as superscript, so here we add a little bit to the chaos by moving another parameter to superscript. But our idea is the following: as subscripts we have only left the parameters with the roles corresponding to $p$ and $q$ in the --- let us say --- standard Besov and Triebel-Lizorkin scales; as superscripts we start by writing the smoothness parameter $s$ and then join in the new parameter. We followed the same philosophy as for the Besov-type and Triebel-Lizorkin-type spaces, though the roles of the new parameters $\tau$ and $u$ differ.

The second trend is more difficult to trace, so we opt to mention \cite{DHR,AH10} --- where the spaces $F^{s(\cdot)}_{\p,\q}(\Rn)$ and $B^{s(\cdot)}_{\p,\q}(\Rn)$, with all the three parameters variable were introduced --- and refer to those papers for some history on the subject and its connection with the study of PDEs. These spaces are build up for variable $\p$ on the variable Lebesgue spaces $L_\p$, which can be traced back to Orlicz \cite{Orl31} and Kov\'{a}{\v c}ik and R\'{a}kosn\'{\i}k \cite{Kovacik}.

Since our results in this paper are only concerned with a Morreyfication of the variable Triebel-Lizorkin scale, from now on we will omit references to possible ways of Morreyfying the variable Besov scale. As in the case of $B^{s(\cdot)}_{\p,\q}(\Rn)$, this poses different challenges, see \cite{AC18}.

Once Morrey spaces $\Mup$ with variable exponents were introduced by Almeida, Hasanov and Samko \cite{AHS08}, Kokilashvili and Meskhi \cite{KM08} and  Mizuta and Shimomura \cite{MS08}, the way was open to try to mix the two trends mentioned above. A first step was done by Fu and Xu \cite{FuXu} by considering Triebel-Lizorkin-Morrey spaces ${\mathcal E}^{s,\u}_{\p,q}(\Rn)$ --- so, still keeping $s$ and $q$ fixed. However, there are some flaws in the \emph{proofs} of some results in that paper. We point out one in Remark \ref{rem:FuXu} below, but for a more detailed analysis see \cite[Remarks 3.12 and 4.20]{Rosenthal}. Later Ho \cite{Ho12} considered ${\mathcal E}^{s(\cdot),\u}_{\p,\q}(\Rn)$ as a particular case of a more general framework. Although Ho considered all four parameters variable, he has imposed on the other hand some not so natural technical restrictions on $s$, $p$ and $u$, e.g. 
\begin{equation}
\sup_{x \in \Rn}\Big( \frac{1}{p(x)}-\frac{1}{u(x)}\Big) < \frac{1}{\sup p}
\label{eq:Horest}
\end{equation}
(see \cite[Definition 6.6]{Ho12}). Variable exponent versions of the Triebel-Lizorkin-type spaces $F^{s,\tau}_{p,q}(\Rn)$ were also recently considered by Yang, Yuan and Zhuo \cite{YYZ15} and Drihem \cite{Dri}. It is not clear how they compare with each other, but the former is closer to our intentions here. Actually, Yang, Yuan and Zhuo replaced the $\tau(\cdot)$ by some more general structure, but under suitable conditions, in particular \eqref{eq:Horest} above (see \cite[Theorem 3.12(i)]{YYZ15}), their spaces cover the Triebel-Lizorkin-Morrey spaces ${\mathcal E}^{s(\cdot),\u}_{\p,\q}(\Rn)$.

In the present paper we get rid of the restriction \eqref{eq:Horest}, our other conditions on $p$ and $u$ are quite natural (for $u$ we don't even impose any smoothness property) and we are able to prove characterization of the spaces by Peetre maximal functions. Actually, we even consider 2-microlocal versions $\Mufwpqx$, building on the knowledge we have acquired in \cite{Kempka09} and \cite{AC16} in the non-Morrey situation. After this work was essentially complete, we learned that 2-microlocal versions of the variable exponent Triebel-Lizorkin-type spaces mentioned above were also recently considered in \cite{WYYZ18}. However, in the part that covers our 2-microlocal variable exponent Triebel-Lizorkin-Morrey spaces the undesirable restrictions mentioned above are still assumed, see \cite[Theorem 4.7(iii)]{WYYZ18}.

Our main results are the convolution inequality in Theorem \ref{thm:MorreyHardy} and the Peetre maximal function characterization of $\Mufwpqx$ in Theorem \ref{thm:lm}.

In a forthcoming paper \cite{PartII} we also use the obtained results to present atomic and molecular representations for these spaces.

We have already advised the reader to check carefully the notation before comparing results coming from different sources. In the comparison made above we have translated everything into our notation. The reader should also be aware that even the very notion of Morrey space with variable exponents, and not only its notation, might differ from work to work.

\section{Preliminaries}
\subsection{General notation}
Here we introduce some of the general notation we use throughout the paper. $\N$, $\N_0$, $\Z$, $\R$ and $\mathbb C$ have the usual meaning as sets of numbers, as well as their $n$-th powers for $n \in \N$. The Euclidean norm in $\Rn$ is denoted by $|\cdot|$, though this notation is also used for the norm of a multi-index, and for Lebesgue measure when it is being applied to (measurable) subsets of $\Rn$. 

The symbol $\Sn$ stands for the usual Schwartz space of infinitely differentiable rapidly decreasing complex-valued functions on $\Rn$.
By { }$\hat{\phi}$ we denote the Fourier transform of $\phi \in \Sn$ in the version
$$ \hat{\phi}(x):=\frac{1}{(2\pi)^{n/2}}\int_{\Rn} e^{-ix\cdot\xi} \phi(\xi)\, d\xi, \quad x \in \Rn,$$
and by ${\phi}^\vee$ we denote the inverse Fourier transform of $\phi$. These transforms are topological isomorphisms in $\Sn$ which extend in the usual way to the space $\SSn$ of tempered distributions, the dual space of $\Sn$.

For two complex or extended real-valued measurable functions $f,g$ on $\Rn$ the convolution $f*g$ is given, whenever it makes sense a.e., by
\begin{align*}
(f\ast g)(x):=\int_\Rn f(x-y)g(y)dy, \quad\text{for } x \in \Rn.
\end{align*}
By $c,c_1,c_\phi,...>0$ we denote constants which may change their value from one line to another. Further, $f\lesssim g$ means that there exists a constant $c>0$ such that $f\leq cg$ holds for a set of variables on which $f$ and $g$ may depend on and which shall be clear from the context. If we write $f\approx g$ then there exists constants $c_1,c_2>0$ with $c_1f\leq g\leq c_2f$. And we shall then say that the expressions $f$ and $g$ are equivalent (across the considered set of variables).

By $Q_r(x) \subset \Rn$ we denote the open cube in $\Rn$ with center $x\in\Rn$ and sides parallel to the axes and of length $2r>0$. Further, $B_r(x) \subset \Rn$ is the open ball in $\Rn$ with center $x\in\Rn$ and radius $r>0$ and by $\chi_A$ we denote the characteristic function of any subset $A$ of $\Rn$.

Given topological vector spaces $A$ and $B$, the notation $A \hookrightarrow B$ will be used to mean that the space $A$ is continuously embedded into the space $B$.

\subsection{Variable exponent Lebesgue spaces}
The set of variable exponents $\mathcal{P}(\Rn)$ is the collection of all measurable functions $p:\Rn\to(0,\infty]$ with $p^-:=\operatornamewithlimits{ess-inf}_{x\in\Rn}p(x)>0$. Further, we set $p^+:=\operatornamewithlimits{ess-sup}_{x\in\Rn}p(x)$.
For exponents with $p(x)\geq1$ and complex or extended real-valued measurable functions $f$ on $\R^n$ a semimodular is defined by
\begin{equation}\notag
\varrho_\p(f):=\int_\Rn \phi_{p(x)}(|f(x)|)\,dx,
\end{equation}
where
$$
\phi_{p(x)}(t) :=
\begin{cases}
t^{p(x)} & \text{ if } p(x)\in (0,\infty), \\
0 & \text{ if } p(x)=\infty \text{ and } t\in [0,1], \\
\infty & \text{ if } p(x)=\infty \text{ and } t\in(1,\infty], \\
\end{cases}
$$
and the variable exponent Lebesgue space $\Lp$ is given by
$$\Lp:=\{f: \text{ there exists a $\lambda>0$ with }\varrho_\p\left(f/\lambda\right)<\infty\},$$
with their elements being taken in the usual sense of equivalence classes of a.e. coincident functions.
This space is complete and normed, hence a Banach space, with the norm
\begin{align*}
\norm{f}{\Lp}:=\inf\{\lambda>0:\varrho_\p(f/\lambda)\leq 1\}.
\end{align*}
It shares many properties with the usual Lebesgue spaces, see for a wide overview \cite{Kovacik}, \cite{DHHR}, \cite{CruzUribe}, but there are also some differences, e.g. it is not translation invariant. By the property 
\begin{equation}\label{eq:Lp/t}
\norm{f}{\Lp}=\norm{|f|^t}{L_{\frac\p t}(\Rn)}^{1/t}\quad\text{for any $t>0$}
\end{equation}
it is also possible to extend the definition of the spaces $\Lp$ to all exponents $p\in\mathcal{P}(\Rn)$. In such more general setting the functional $\norm{\cdot\,}{\Lp}$ need not be a norm, although it is always a quasi-norm.\\
Many theorems for variable Lebesgue spaces $\Lp$ are only valid for exponents $\p$ within a subclass of $\mathcal{P}(\Rn)$ where they satisfy certain regularity conditions. An appropriate subclass in this sense is the set $\Plog$ defined below.
\begin{dfn}
 Let $g:\Rn\to\R$.
 \begin{itemize}
  \item[(i)] We say that $g$ is locally $\log$ H\"older continuous, $g\in C^{\log}_{{\rm loc}}(\Rn)$, if there exists a constant $c_{\log}(g) > 0$ with
  \begin{align*}
   |g(x)-g(y)|\leq\frac{c_{\log}(g)}{\log(e+\frac{1}{|x-y|})}\quad\text{for all }x,y\in\Rn.
  \end{align*}
  \item[(ii)] We say that $g$ is globally $\log$ H\"older continuous, $g\in C^{\log}$, if it is locally $\log$ H\"older continuous and there exist a $g_\infty\in \R$ and a constant $c_\infty(g) > 0$ with
  \begin{align}\label{dfn:cinfty}
   |g(x)-g_\infty|\leq\frac{c_\infty(g)}{\log(e+|x|)}\quad\text{for all }x\in\Rn.
  \end{align}
  \item[(iii)] We write $g\in\Plog$ if $0<g^-\leq g(x)\leq g^+\leq\infty$ with $1/g\in C^{\log}(\Rn)$.
 \end{itemize}
\end{dfn}
Since a control of the quasi-norms of characteristic functions of balls in variable exponent spaces will be crucial for our estimates, we present below a result in that direction and which is an adapted version of \cite[Corollary 4.5.9]{DHHR} to the case $0<p^-\leq p^+\leq\infty$. It can easily be obtained from that result by exploring the property \eqref{eq:Lp/t} above.
\begin{lem}\label{lem:xinorm}
 Let $p\in\Plog$. Then for all $x_0\in\R^n$ and all $r>0$ we have that 
 \begin{align*}
  \norm{\chi_{B_r(x_0)}}{L_{p(\cdot)}(\Rn)}&\approx \norm{\chi_{Q_r(x_0)}}{L_{p(\cdot)}(\Rn)}\\
  &\approx \begin{cases}
            r^{\frac{n}{p(x)}}\quad&,\;\text{if }r\leq 1\text{ and }x\in B_r(x_0)\\
            r^{\frac{n}{p_\infty}}&,\;\text{if }r\geq1
           \end{cases}.
 \end{align*}
Here we denote $\frac1{p_\infty}:=\Big(\frac1{p}\Big)_\infty$ which is given by \eqref{dfn:cinfty}.
\end{lem}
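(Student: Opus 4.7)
The plan is to reduce the general case $0<p^-\leq p^+\leq\infty$ to the case $p^-\geq 1$, which is exactly \cite[Corollary 4.5.9]{DHHR}, by means of the power-rescaling identity \eqref{eq:Lp/t}, and then to upgrade the ball estimate to a cube estimate by elementary inclusions.

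First I would fix $t\in(0,\min(1,p^-))$ and observe that $p(\cdot)/t\in\Plog$ with $(p/t)^-\geq 1$ and $(p/t)_\infty=p_\infty/t$ (the class $C^{\log}$ is stable under multiplication by the positive constant $t$). Since $\chi_A^t=\chi_A$ for any measurable set $A$, the identity \eqref{eq:Lp/t} applied with $f=\chi_{B_r(x_0)}$ gives
\begin{equation*}
\norm{\chi_{B_r(x_0)}}{L_\p(\Rn)}=\norm{\chi_{B_r(x_0)}}{L_{p(\cdot)/t}(\Rn)}^{1/t}.
\end{equation*}
Since $(p/t)^-\geq 1$, \cite[Corollary 4.5.9]{DHHR} applies to the exponent $p(\cdot)/t$ and yields $\norm{\chi_{B_r(x_0)}}{L_{p(\cdot)/t}}\approx r^{tn/p(x)}$ for $r\leq 1$, $x\in B_r(x_0)$, and $\norm{\chi_{B_r(x_0)}}{L_{p(\cdot)/t}}\approx r^{tn/p_\infty}$ for $r\geq 1$. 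Raising to the power $1/t$ produces the claimed ball estimate. Note that by the local log-H\"older continuity of $1/p$, the quantity $r^{n/p(x)}$ is, for $r\leq 1$ and $x\in B_r(x_0)$, equivalent to $r^{n/p(x_0)}$: writing $r^{n/p(x)}/r^{n/p(x_0)}=\exp\bigl(n(1/p(x)-1/p(x_0))\log r\bigr)$ and using $|\log r|\lesssim \log(e+1/|x-x_0|)$ for $|x-x_0|\leq r\leq 1$ shows that this ratio is bounded above and below, so the reference point in the exponent does not matter up to equivalence.

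For the cube, use the inclusions $B_r(x_0)\subset Q_r(x_0)\subset B_{r\sqrt n}(x_0)$ and the monotonicity of $\norm{\cdot}{L_\p}$ with respect to pointwise order to sandwich
\begin{equation*}
\norm{\chi_{B_r(x_0)}}{\Lp}\leq \norm{\chi_{Q_r(x_0)}}{\Lp}\leq \norm{\chi_{B_{r\sqrt n}(x_0)}}{\Lp}.
\end{equation*}
If $r\sqrt n\leq 1$, both outer expressions are, by the ball estimate, equivalent to $r^{n/p(x)}$ for any $x\in B_r(x_0)$ (absorbing the factor $(\sqrt n)^{n/p(y)}$, which is bounded, and relocating the reference point via log-H\"older as above). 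If $r\geq 1$, both outer expressions are $\approx r^{n/p_\infty}$. The only slightly delicate range is the transition $1/\sqrt n\leq r\leq 1$, but there $r\approx 1$ makes every relevant quantity bounded above and below by absolute constants, so the equivalence holds trivially.

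The reduction itself is essentially formal; the main nuisance, rather than a genuine obstacle, is the careful bookkeeping at the threshold $r\approx 1$ and the relocation of the reference point $x\in B_r(x_0)$ inside the exponent, both of which are routine consequences of the log-H\"older hypothesis on $1/p$.
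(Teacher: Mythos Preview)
Your proposal is correct and follows exactly the route the paper indicates: the paper does not give a detailed argument but simply states that the lemma is an adaptation of \cite[Corollary 4.5.9]{DHHR} to the range $0<p^-\leq p^+\leq\infty$ obtained by exploiting the power-rescaling identity \eqref{eq:Lp/t}, which is precisely your reduction via $t\in(0,\min(1,p^-))$ and $\chi_A^t=\chi_A$. Your additional bookkeeping for the cube via the inclusions $B_r\subset Q_r\subset B_{r\sqrt n}$, the relocation of the reference point in the exponent through the local $\log$-H\"older continuity of $1/p$, and the treatment of the threshold $r\approx 1$ are all standard and correctly handled details that the paper leaves implicit.
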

\section{Variable exponents Morrey spaces}

Now, we can define the Morrey spaces which we are interested in. 
\begin{dfn}
Let $p,u\in\calP$ with $p\leq u$. Then the Morrey space $\Mup$ is the collection of all (complex or extended real-valued) measurable functions $f$ on $\Rn$ with (quasi-norm given by)
\begin{align*}
 \norm{f}{\Mup}:=\sup_{x\in\Rn,r>0}r^{n\left(\frac1{u(x)}-\frac1{p(x)}\right)}\norm{f}{L_{p(\cdot)}(B_r(x))}<\infty.
\end{align*} 
\end{dfn}

Next we state the convolution inequality from \cite{DHR}, which is heavily used as a replacement of the Hardy-Littlewood maximal inequality in the variable setting, where $L_{p(\cdot)}(\ell_{q(\cdot)})$ stands for the set of all sequences $(f_\nu)_{\nu \in \N_0}$ of (complex or extended real-valued) measurable functions on $\Rn$ such that $\norm{\left(\sum_{\nu=0}^\infty|f_\nu(\cdot)|^{q(\cdot)}\right)^{1/q(\cdot)}}{L_{p(\cdot)}(\Rn)}$ is finite.\\
\begin{lem}\label{lem:Convinequality}
 Let $\eta_{\nu,m}(x):=2^{\nu n}(1+2^\nu|x|)^{-m}$ for $\nu\in\N_0$ and $m>0$. Let $p,q\in\Plog$ with $1<p^-\leq p^+<\infty$ and $1<q^-\leq q^+<\infty$.
 Then for every $m>n$ there exists a constant $c>0$ such that for all sequences $(f_\nu)_\nu\in L_{p(\cdot)}(\ell_{q(\cdot)})$
 \begin{align*}
  \norm{\left(\sum_{\nu=0}^\infty|\eta_{\nu,m}\ast f_\nu(\cdot)|^{q(\cdot)}\right)^{1/q(\cdot)}}{L_{p(\cdot)}(\Rn)}\leq c\norm{\left(\sum_{\nu=0}^\infty|f_\nu(\cdot)|^{q(\cdot)}\right)^{1/q(\cdot)}}{L_{p(\cdot)}(\Rn)}.
 \end{align*}
\end{lem}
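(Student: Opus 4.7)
The plan is to reduce the estimate to a vector-valued Fefferman--Stein type maximal inequality on $\Lp(\ell_\q)$, by exploiting the classical pointwise bound that convolution with a radially decreasing approximate identity is dominated by the Hardy--Littlewood maximal operator. For $m>n$, each $\eta_{\num}$ is non-negative, radially decreasing, and satisfies $\|\eta_{\num}\|_{L_1(\Rn)}\lesssim 1$ uniformly in $\nu\in\N_0$ (seen by substituting $z=2^\nu y$, which reduces the integral to $\int(1+|z|)^{-m}dz$, finite exactly when $m>n$). A standard layer-cake/rearrangement argument then yields the pointwise estimate
$$|\eta_{\num}\ast f_\nu|(x)\le c\, Mf_\nu(x),\quad x\in\Rn,\ \nu\in\N_0,$$
with $c$ independent of $\nu$, where $M$ is the Hardy--Littlewood maximal operator. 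Raising to the $q(x)$-th power, summing over $\nu$, taking the $1/q(x)$-th root, and absorbing the factor $c^{q(x)}\le c^{q^+}$, one obtains
$$\Bigl(\sum_{\nu=0}^{\infty}|\eta_{\num}\ast f_\nu(x)|^{q(x)}\Bigr)^{1/q(x)}\le c'\Bigl(\sum_{\nu=0}^{\infty}(Mf_\nu)(x)^{q(x)}\Bigr)^{1/q(x)}$$
pointwise.

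It therefore remains to establish the vector-valued maximal inequality
$$\norm{\Bigl(\sum_{\nu=0}^{\infty}(Mf_\nu)^{\q}\Bigr)^{1/\q}}{\Lp}\lesssim \norm{\Bigl(\sum_{\nu=0}^{\infty}|f_\nu|^{\q}\Bigr)^{1/\q}}{\Lp},$$
which is the genuine content of \cite{DHR} and the main obstacle. My approach would follow their modular strategy: on the scale of unit cubes use the log-H\"older continuity of $1/p$ and $1/q$ to freeze the exponents, up to errors controlled by weights of the form $(e+|x|)^{-n}$, reducing the question locally to the classical Fefferman--Stein inequality on constant-exponent Lebesgue spaces; then paste the local estimates together, exploiting the decay of $1/p-1/p_\infty$ and $1/q-1/q_\infty$ at infinity to absorb the cross terms into the same type of weights. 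The hypothesis $p^->1$ supplies the Muckenhoupt-type room that the maximal operator needs on the resulting frozen Lebesgue spaces; and $q^->1$ ensures that $\ell_\q$ is genuinely a norm, so that a duality argument in the inner index can lift the scalar maximal inequality to its vector-valued analogue.

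Combining the pointwise reduction with the vector-valued maximal inequality yields the claimed convolution estimate. The requirement $m>n$ is used only in the first step, to guarantee the uniform bound $\|\eta_{\num}\|_{L_1(\Rn)}\lesssim 1$; all the subtlety of the variable-exponent setting is concentrated in the second step.
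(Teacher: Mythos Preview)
Your reduction to the vector-valued Fefferman--Stein inequality
\[
\norm{\Bigl(\sum_{\nu=0}^{\infty}(Mf_\nu)^{\q}\Bigr)^{1/\q}}{\Lp}\lesssim \norm{\Bigl(\sum_{\nu=0}^{\infty}|f_\nu|^{\q}\Bigr)^{1/\q}}{\Lp}
\]
is a genuine gap: this inequality is known to \emph{fail} in general when the inner exponent $q(\cdot)$ is non-constant, even for smooth log-H\"older exponents (see \cite[Section~4]{DHR} and the discussion in \cite{DHHR}). The paper is explicit about this: the convolution inequality is introduced precisely ``as a replacement of the Hardy--Littlewood maximal inequality in the variable setting'', and Remark~\ref{rem:FuXu} repeats that it substitutes for the vector-valued maximal inequality of \cite{FeffermannStein}. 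So you have the logical direction reversed: in \cite{DHR} the convolution estimate is proved \emph{directly}, not via $M$, and it then plays the role that the unavailable maximal inequality would have played.

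Your sketched modular strategy for the maximal inequality --- freeze $q$ locally on unit cubes, apply the constant-exponent Fefferman--Stein theorem, paste together --- breaks down for a concrete reason: $Mf_\nu(x)$ is a supremum over \emph{all} balls centred at $x$, so it depends on values of $f_\nu$ arbitrarily far from $x$, where $q(\cdot)$ may differ substantially from $q(x)$. Freezing $q$ at a point does not control $(Mf_\nu(x))^{q(x)}$ in terms of quantities localisable to a single cube. By contrast, the kernels $\eta_{\nu,m}$ have a fixed decay scale $2^{-\nu}$ tied to the index $\nu$, and this coupling between the summation index and the spatial scale is exactly what the direct argument in \cite{DHR} exploits; it has no analogue for the untruncated maximal function. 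The pointwise bound $|\eta_{\nu,m}\ast f_\nu|\lesssim Mf_\nu$ in your first step is correct but throws this coupling away, which is why the route cannot be completed.
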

Now, we are able to formulate and prove the corresponding result with Morrey spaces instead of Lebesgue spaces, where $M_{p(\cdot)}^{u(\cdot)}(\ell_{\q})$ shall stand for the set of all sequences $(f_\nu)_{\nu \in \N_0}$ of (complex or extended real-valued) measurable functions on $\Rn$ such that $\norm{\left(\sum_{\nu=0}^\infty|f_\nu(\cdot)|^{\q}\right)^{1/\q}}{\Mup}$ is finite.\\
Such a result will be the main tool for these variable spaces and will be used in further results to be presented in this paper. 
\begin{thm}\label{thm:MorreyHardy}
Let $\eta_{\nu,m}$ be as in the preceding lemma. Let  $p,q\in\Plog$ and $u\in\mathcal{P}(\Rn)$ with $1<p^-\leq p(x)\leq u(x)\leq \sup u < \infty$ and  $q^-,q^+\in(1,\infty)$. For every 
\begin{align}
\label{condition_m}
m>n+n\max\left(0,\sup_{x\in\Rn}\left(\frac1{p(x)}-\frac1{u(x)}\right)-\frac1{p_\infty}\right)
\end{align} 
there exists a $c>0$ such that for all $(f_\nu)_\nu \subset M_{p(\cdot)}^{u(\cdot)}(\ell_{\q})$
 \begin{align*}
  \norm{\left(\sum_{\nu=0}^\infty|\eta_{\nu,m}\ast f_\nu(\cdot)|^{q(\cdot)}\right)^{1/\q}}{\Mup}\leq c\norm{\left(\sum_{\nu=0}^\infty|f_\nu(\cdot)|^{\q}\right)^{1/\q}}{\Mup}.
 \end{align*}
\end{thm}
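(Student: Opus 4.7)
The plan is as follows. Fix $y \in \Rn$ and $r > 0$, set $B := B_r(y)$, and decompose each $f_\nu = \sum_{k \geq 0} f_\nu^k$ with $f_\nu^0 := f_\nu \chi_{2B}$ and $f_\nu^k := f_\nu \chi_{A_k}$, where $A_k := 2^{k+1}B \setminus 2^kB$ for $k \geq 1$. Write $g := \bigl(\sum_\nu |f_\nu|^{q(\cdot)}\bigr)^{1/q(\cdot)}$ and $F_k := \bigl(\sum_\nu |\eta_{\nu,m}\ast f_\nu^k|^{q(\cdot)}\bigr)^{1/q(\cdot)}$. Minkowski's inequality in $\ell^{q(x)}$ (applicable because $q^- > 1$) applied pointwise, together with the triangle inequality in $L_{p(\cdot)}(B)$ (valid since $p^- > 1$), reduces the statement to the bound
\begin{equation*}
r^{n(1/u(y)-1/p(y))}\sum_{k\geq 0}\norm{F_k}{L_{p(\cdot)}(B)}\lesssim\norm{g}{\Mup}
\end{equation*}
uniformly in $y$, $r$ and the sequence $(f_\nu)_\nu$.

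For the local part $k=0$, Lemma~\ref{lem:Convinequality} gives $\norm{F_0}{\Lp}\lesssim\norm{g}{L_{p(\cdot)}(2B)}$, and since $0\leq 1/p(y)-1/u(y)\leq 1/p^-$ the prefactor $r^{n(1/u(y)-1/p(y))}$ is uniformly comparable to $(2r)^{n(1/u(y)-1/p(y))}$; the Morrey norm applied at center $y$ and radius $2r$ then yields the required bound by $\norm{g}{\Mup}$.

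For $k\geq 1$, the annular geometry gives $|x-z|\geq 2^{k-1}r$ for $x\in B$, $z\in A_k$, so $\eta_{\nu,m}(x-z)\leq 2^{\nu n}(1+2^{\nu+k-1}r)^{-m}$. The pointwise estimate $|f_\nu|\leq g$ then produces $|\eta_{\nu,m}\ast f_\nu^k(x)|\leq c_{\nu,k}\int_{2^{k+1}B}g$ with $c_{\nu,k}$ independent of $x$. Splitting the series $\sum_\nu c_{\nu,k}^{q(x)}$ at the level $\nu\sim\log_2(1/(2^{k-1}r))$, with the tail controlled via $m>n$, gives the uniform-in-$x\in B$ bound
\begin{equation*}
F_k(x)\leq C\,T_k\int_{2^{k+1}B}g,
\end{equation*}
where $T_k\approx(2^kr)^{-n}$ if $2^kr\leq 1$ and $T_k\approx(2^kr)^{-m}$ if $2^kr\geq 1$. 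Variable-exponent H\"older's inequality yields $\int_{2^{k+1}B}g\leq C\norm{g}{L_{p(\cdot)}(2^{k+1}B)}\norm{\chi_{2^{k+1}B}}{L_{p'(\cdot)}(\Rn)}$, while the Morrey definition applied at $y$ and radius $2^{k+1}r$ gives $\norm{g}{L_{p(\cdot)}(2^{k+1}B)}\leq(2^{k+1}r)^{n(1/p(y)-1/u(y))}\norm{g}{\Mup}$.

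The main obstacle is the summability in $k$. I would use Lemma~\ref{lem:xinorm} to estimate $\norm{\chi_B}{\Lp}$ and $\norm{\chi_{2^{k+1}B}}{L_{p'(\cdot)}(\Rn)}$, splitting into the three regimes $2^{k+1}r\leq 1$, $r\leq 1\leq 2^{k+1}r$ and $r\geq 1$, because that lemma switches between the pointwise exponents at $y$ and the exponents at infinity. In the small-ball regime all powers of $r$ and $2^k$ cancel except for a factor $2^{-kn/u(y)}$, which sums since $\sup u<\infty$. In the remaining regimes a careful bookkeeping reduces the decisive power of $2^k$ to $-m+n(1/p(y)-1/u(y))+n/p'_\infty$; using $1/p'_\infty=1-1/p_\infty$ this is negative exactly when $m>n+n(1/p(y)-1/u(y)-1/p_\infty)$, and taking supremum in $y$ delivers precisely hypothesis~\eqref{condition_m}. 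Under this condition the geometric series in $k$ converges and the desired Morrey bound follows.
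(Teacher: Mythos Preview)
Your proposal is correct and follows essentially the same route as the paper's proof: the same annular decomposition of $f_\nu$, the same use of Lemma~\ref{lem:Convinequality} for the local piece, the same geometric bound on $\eta_{\nu,m}$ over the annuli, the same control of $\sum_\nu c_{\nu,k}$ by $(2^kr)^{-n}$ or $(2^kr)^{-m}$ according to whether $2^kr\lessgtr 1$, and the same three-regime analysis via Lemma~\ref{lem:xinorm} leading exactly to condition~\eqref{condition_m}. The only cosmetic difference is that you invoke the pointwise bound $|f_\nu|\le g$ to pull $g$ out of the convolution immediately, whereas the paper first passes from $\ell_{q(x)}$ to $\ell_1$ and then applies H\"older's inequality twice (in the integral and in the $\nu$-sum); both routes land on the same quantity $III$ to be summed in $k$.
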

\begin{proof}
 
 \underline{First step:} We take an arbitrary $x_0\in\Rn$ and $r>0$ and decompose for every $\nu\in\N_0$
 \begin{align*}
  f_\nu(x)=f_\nu^0(x)+\sum_{i=1}^\infty f_\nu^i(x),
 \end{align*}
 where $f_\nu^0:=f_\nu\chi_{B_{2r}(x_0)}$ and $f_\nu^i:=f_\nu\chi_{B_{2^{i+1}r}(x_0)\setminus B_{2^{i}r}(x_0)}$. Now, we have using triangle inequalities
 \begin{align}
   &\norm{\left(\sum_{\nu=0}^\infty|\eta_{\nu,m}\ast f_\nu(\cdot)|^{\q}\right)^{1/\q}}{L_{p(\cdot)}(B_r(x_0))} \label{vectLpconvol} \\
	&\leq \norm{\left(\sum_{\nu=0}^\infty\Big(\eta_{\nu,m}\ast \sum_{i=0}^\infty |f_\nu^i(\cdot)|\Big)^{\q}\right)^{1/\q}}{L_{p(\cdot)}(B_r(x_0))}\notag\\
		&\leq   \norm{\sum_{i=0}^\infty \left(\sum_{\nu=0}^\infty\big(\eta_{\nu,m}\ast |f_\nu^i(\cdot)|\big)^{\q}\right)^{1/\q}}{L_{p(\cdot)}(B_r(x_0))}
   \leq I+I\!I,\notag
 \end{align}
\noindent where
$$I := \norm{\left(\sum_{\nu=0}^\infty\big(\eta_{\nu,m}\ast |f_\nu^0(\cdot)|\big)^{\q}\right)^{1/\q}}{L_{p(\cdot)}(B_r(x_0))},$$
$$I\!I := \norm{\sum_{i=1}^{\infty} \left(\sum_{\nu=0}^\infty\big(\eta_{\nu,m}\ast |f_\nu^i(\cdot)|\big)^{\q}\right)^{1/\q}}{L_{p(\cdot)}(B_r(x_0))}. $$
We shall show, under the given hypotheses, that both terms I and II above can be estimated by
\begin{equation}
\leq c\, r^{n\left(\frac1{p(x_0)}-\frac1{u(x_0)}\right)} \norm{\left(\sum_{\nu=0}^\infty|f_\nu(\cdot)|^{\q}\right)^{1/\q}}{\Mup}
\label{eq:mainestimate}
\end{equation}
\noindent with $c>0$ independent of the $(f_\nu)_\nu$, $x_0$ and $r$ considered. So that dividing (\ref{vectLpconvol}) by $r^{n\left(\frac1{p(x_0)}-\frac1{u(x_0)}\right)}$ and taking the supremum over all $x_0\in\Rn$ and $r>0$ gives the desired inequality and finishes the proof.

\smallskip 

 \underline{Second step:} We get the estimate \eqref{eq:mainestimate} for $I$ with the help of Lemma \ref{lem:Convinequality} (using the hypothesis $m>n$):
 \begin{align}
  & I \leq \norm{\left(\sum_{\nu=0}^\infty \big(\eta_{\nu,m}\ast |f_\nu^0(\cdot)|\big)^{\q}\right)^{1/\q}}{L_{p(\cdot)}(\Rn)}\lesssim\norm{\left(\sum_{\nu=0}^\infty|f_\nu^0(\cdot)|^{\q}\right)^{1/\q}}{L_{p(\cdot)}(\Rn)}\notag\\
  &\hspace{1em}=(2r)^{n\left(\frac1{p(x_0)}-\frac1{u(x_0)}\right)} (2r)^{n\left(\frac1{u(x_0)}-\frac1{p(x_0)}\right)} \norm{\left(\sum_{\nu=0}^\infty|f_\nu(\cdot)|^{\q}\right)^{1/\q}}{L_{p(\cdot)}(B_{2r}(x_0))}\notag\\
  &\hspace{1em}\leq 2^{\frac{n}{p^-}} r^{n\left(\frac1{p(x_0)}-\frac1{u(x_0)}\right)}\norm{\left(\sum_{\nu=0}^\infty|f_\nu(\cdot)|^{\q}\right)^{1/\q}}{\Mup}.\notag
 \end{align}

\smallskip

\underline{Third step:} First, we show a size estimate to tackle $I\!I$. We observe that for $x\in B_{r}(x_0)$ and $y\in {B_{2^{i+1}r}(x_0)\setminus B_{2^{i}r}(x_0)}$ we have the following inequality:
\begin{align*}
 |x-y|&\geq||x-x_0|-|y-x_0||\geq|y-x_0|-|x-x_0|\geq 2^ir-r\geq c 2^{i}r.
\end{align*}
Now, we can use this and estimate with $x\in B_r(x_0)$ for every $i\in\N$ \vspace{-.1cm}
\begin{align}
\left(\sum_{\nu=0}^\infty \big(\eta_{\nu,m}\ast |f_\nu^i(x)|\big)^{q(x)}\right)^{\frac1{q(x)}}
&\lesssim \left(\sum_{\nu=0}^\infty \int_\Rn 2^{\nu n}(1+2^\nu2^ir)^{-m}|f_\nu^i(y)|\, dy\right)\notag\\
&\hspace{-8em}=\int_{B_{2^{i+1}r}(x_0)}\left(\sum_{\nu=0}^\infty2^{\nu n}(1+2^\nu2^ir)^{-m}\left|f_\nu^i(y)\right|\right)dy,\label{eq:SE}
\end{align}

where we have used $\ell_1\hookrightarrow\ell_{q(x)}$  and Beppo-Levi's theorem.

\smallskip

\underline{Forth step:} We use \eqref{eq:SE} to handle $I\!I$. By applying H\"older's inequality in the integral with $p(\cdot) > 1$ we get
\begin{align}
& I\!I \lesssim \norm{\chi_{B_r(x_0)}}{L_{p(\cdot)}(\Rn)} \sum_{i=1}^\infty \int_{B_{2^{i+1}r}(x_0)}\left(\sum_{\nu=0}^\infty2^{\nu n}(1+2^\nu2^ir)^{-m}\left|f_\nu^i(y)\right|\right)dy\notag\\
&\lesssim \norm{\chi_{B_r(x_0)}}{L_{p(\cdot)}(\Rn)} \sum_{i=1}^\infty \norm{\chi_{B_{2^{i+1}r}(x_0)}}{L_{p'(\cdot)}(\Rn)}\notag\\
&\hspace{9em}\times\norm{\left(\sum_{\nu=0}^\infty2^{\nu n}(1+2^\nu2^ir)^{-m}\left|f_\nu^i(\cdot)\right|\right)}{L_{p(\cdot)}(\Rn)}.\label{eq:new1}
\end{align}
We estimate the last norm with H\"older's inequality in the sum with $\q > 1$ and use afterwards $\ell_1\hookrightarrow\ell_{q'(\cdot)}$ to obtain
\begin{align}
&\norm{\left(\sum_{\nu=0}^\infty2^{\nu n}(1+2^\nu2^ir)^{-m}\left|f_\nu^i(\cdot)\right|\right)}{L_{p(\cdot)}(\Rn)}\notag\\
&\hspace{2em}\leq\norm{\left(\sum_{\nu=0}^\infty2^{\nu nq'(\cdot)}(1+2^\nu2^ir)^{-mq'(\cdot)}\right)^{1/q'(\cdot)}\left(\sum_{\nu=0}^\infty\left|f_\nu^i(\cdot)\right|^\q\right)^{1/\q}}{L_{p(\cdot)}(\Rn)}\notag\\
&\hspace{2em}\leq\left(\sum_{\nu=0}^\infty2^{\nu n}(1+2^\nu2^ir)^{-m}\right)\norm{\left(\sum_{\nu=0}^\infty\left|f_\nu^i(\cdot)\right|^\q\right)^{1/\q}}{L_{p(\cdot)}(\Rn)}\notag
\\
&\hspace{2em}\leq\left(\sum_{\nu=0}^\infty2^{\nu n}(1+2^\nu2^ir)^{-m}\right)(2^{i+1}r)^{\left(\frac{n}{p(x_0)}-\frac{n}{u(x_0)}\right)}\notag
\\
&\qquad\qquad \times (2^{i+1}r)^{\left(\frac{n}{u(x_0)}-\frac{n}{p(x_0)}\right)}\norm{\left(\sum_{\nu=0}^\infty\left|f_\nu(\cdot)\right|^\q\right)^{1/\q}}{L_{p(\cdot)}(B_{2^{i+1}r}(x_0))}\notag
\end{align}
\begin{align}
&\hspace{2em}\leq\left(\sum_{\nu=0}^\infty2^{\nu n}(1+2^\nu2^ir)^{-m}\right)(2^{i+1}r)^{\left(\frac{n}{p(x_0)}-\frac{n}{u(x_0)}\right)}\notag\\
&\hspace{13em}\times\norm{\left(\sum_{\nu=0}^\infty\left|f_\nu(\cdot)\right|^\q\right)^{1/\q}}{\Mup}.
\label{eq:new2}
\end{align}
Therefore, connecting \eqref{eq:new1} and \eqref{eq:new2}, we obtain \eqref{eq:mainestimate} for the term $I\!I$ if we can show that
\begin{align}
\label{eq:final}
&I\!I\!I := \norm{\chi_{B_r(x_0)}}{L_{p(\cdot)}(\Rn)} \sum_{i=1}^\infty \norm{\chi_{B_{2^{i+1}r}(x_0)}}{L_{p'(\cdot)}(\Rn)}\notag\\
&\hspace{2em}\times \left(\sum_{\nu=0}^\infty2^{\nu n}(1+2^\nu2^ir)^{-m}\right)(2^{i+1}r)^{\left(\frac{n}{p(x_0)}-\frac{n}{u(x_0)}\right)}
\leq c\, r^{\left(\frac{n}{p(x_0)}-\frac{n}{u(x_0)}\right)},
\end{align}
with $c>0$ independent of $r>0$ and $x_0\in\Rn$.\\

In order to show \eqref{eq:final} we concentrate on two cases.\\
\underline{Case $0<r<1$:} We choose $J\in \N$ so that $2^Jr \leq 1 < 2^{J+1}r$ if such a $J$ exists, choose $J=1$ otherwise and split the summation with respect to $i\in\N$ in two sums $\sum_{i=1}^{J-1}$ and $\sum_{i=J}^{\infty}$. Of course, when $J=1$ the sum $\sum_{i=1}^{J-1}$ does not exist. Denote by $I\!I\!I_1$ and $I\!I\!I_2$ the corresponding splitting of $I\!I\!I$. We obtain with Lemma \ref{lem:xinorm} that
\begin{align}
&I\!I\!I_1 \lesssim r^{\frac n{p(x_0)}} \sum_{i=1}^{J-1}(2^{i+1}r)^{\frac n{p'(x_0)}}(2^{i+1}r)^{n\left(\frac1{p(x_0)}-\frac1{u(x_0)}\right)}\left(\sum_{\nu=0}^\infty2^{\nu n}(1+2^\nu2^ir)^{-m}\right)\notag\\
&\hspace{2em}=r^{n\left(\frac1{p(x_0)}-\frac1{u(x_0)}\right)}\sum_{i=1}^{J-1}2^{-(i+1)\frac n{u(x_0)}}(2^{i+1}r)^{n\left(\frac1{p(x_0)}+\frac1{p'(x_0)}\right)}\notag\\
&\hspace{17em}\times\left(\sum_{\nu=0}^\infty2^{\nu n}(1+2^\nu2^ir)^{-m}\right)\notag\\
&\hspace{2em}\lesssim r^{n\left(\frac1{p(x_0)}-\frac1{u(x_0)}\right)}\sum_{i=1}^{J-1}2^{-i\frac n{\sup u}}(2^{i}r)^{n}\left(\sum_{\nu=0}^\infty2^{\nu n}(1+2^\nu2^ir)^{-m}\right)\label{eq:new3}.
\end{align} 
Now, we look at the last term in \eqref{eq:new3}. We denote by $V\in\N$ the unique number with $2^V\leq(2^ir)^{-1}<2^{V+1}$ and get with $m>n$
\begin{align*}
\left(\sum_{\nu=0}^\infty2^{\nu n}(1+2^\nu2^ir)^{-m}\right)
&=\sum_{\nu=0}^V2^{\nu n}(1+2^\nu2^ir)^{-m}+\sum_{\nu=V+1}^\infty2^{\nu n}(1+2^\nu2^ir)^{-m}\\
&\hspace{-1em}\leq \sum_{\nu=0}^V2^{\nu n}+\sum_{\nu=V+1}^\infty2^{\nu (n-m)}(2^ir)^{-m}
\\
&\hspace{-1em}=2^{Vn}\sum_{\nu=0}^V2^{-(V-\nu) n}+(2^ir)^{-m}\sum_{\nu=V+1}^\infty2^{\nu (n-m)}
\end{align*}
\begin{align*}
&\hspace{-1em}\leq(2^ir)^{-n}\sum_{k=0}^\infty2^{-kn}+(2^ir)^{-m}2^{(V+1)(n-m)}\sum_{k=0}^\infty2^{-k(m-n)}\\
&\hspace{-1em}\leq(2^ir)^{-n}\frac1{1-2^{-n}}+(2^ir)^{-m}(2^ir)^{m-n}\frac1{1-2^{n-m}}\\
&\hspace{-1em}=c\,(2^ir)^{-n},
\end{align*}
where $c>0$ only depends on $m$ and $n$. Plugging this now into \eqref{eq:new3} we obtain \eqref{eq:final} for the term $I\!I\!I_1$ of $I\!I\!I$.\\
Now, we handle the term $I\!I\!I_2$, where it holds $2^{i+1}r>1$ for $i\geq J$. We get, using Lemma \ref{lem:xinorm}, the estimate $2^{\nu n}(1+2^\nu2^ir)^{-m} \leq 2^{\nu(n-m)}(2^ir)^{-m}$ and the hypothesis \eqref{condition_m} on $m$, that
\begin{align}
&I\!I\!I_2 \lesssim r^{n\left(\frac1{p(x_0)}-\frac1{u(x_0)}\right)} \sum_{i=J}^{\infty} r^{\frac n{p(x_0)}}(2^{i+1}r)^{\frac n {p'_\infty}}(2^{i+1})^{n\left(\frac1{p(x_0)}-\frac1{u(x_0)}\right)}(2^ir)^{-m}\notag\\
&\hspace{2em}\approx r^{n\left(\frac1{p(x_0)}-\frac1{u(x_0)}\right)}r^{\left(\frac n{p'_\infty}+\frac n{p(x_0)}-m\right)} \sum_{i=J}^{\infty} 2^{(i+1)\left(\frac n{p'_\infty}+\frac n{p(x_0)}-\frac n{u(x_0)}-m\right)}\notag\\
&\hspace{2em}= r^{n\left(\frac1{p(x_0)}-\frac1{u(x_0)}\right)}r^{\left(n-\frac n{p_\infty}+\frac n{p(x_0)}-m\right)} \sum_{i=J}^{\infty} 2^{(i+1)\left(n-\frac n{p_\infty}+\frac n{p(x_0)}-\frac n{u(x_0)}-m\right)}\notag\\
&\hspace{2em}\approx r^{n\left(\frac1{p(x_0)}-\frac1{u(x_0)}\right)}r^{\left(n-\frac n{p_\infty}+\frac n{p(x_0)}-m\right)}  2^{(J+1)\left(n-\frac n{p_\infty}+\frac n{p(x_0)}-\frac n{u(x_0)}-m\right)}\notag\\
&\hspace{2em}= r^{n\left(\frac1{p(x_0)}-\frac1{u(x_0)}\right)}(2^{J+1}r)^{\left(n-\frac n{p_\infty}+\frac n{p(x_0)}-\frac n{u(x_0)}-m\right)}r^{\frac n{u(x_0)}}\notag\\
&\hspace{2em}\lesssim r^{n\left(\frac1{p(x_0)}-\frac1{u(x_0)}\right)}\notag,
\end{align}
so the estimate \eqref{eq:final} also holds for the term $I\!I\!I_2$ of $I\!I\!I$, and this finishes the case $0<r<1$.\\
\underline{Case $r\geq1$:} To ensure that \eqref{eq:final} holds we use again Lemma \ref{lem:xinorm}, the estimate $2^{\nu n}(1+2^\nu2^ir)^{-m} \leq 2^{\nu(n-m)}(2^ir)^{-m}$ and the hypothesis \eqref{condition_m} on $m$ and get
\begin{align}
I\!I\!I &\lesssim r^{n\left(\frac1{p(x_0)}-\frac1{u(x_0)}\right)} \sum_{i=1}^{\infty} r^{\frac n{p_\infty}} (2^{i+1}r)^{\frac n {p'_\infty}}(2^{i+1})^{n\left(\frac1{p(x_0)}-\frac1{u(x_0)}\right)}(2^ir)^{-m}\notag\\
&\approx r^{n\left(\frac1{p(x_0)}-\frac1{u(x_0)}\right)}r^{\frac n{p_\infty}+\frac n{p'_\infty}-m} \sum_{i=1}^{\infty}2^{(i+1)\left(  \frac n{p'_\infty}+\frac n{p(x_0)}-\frac n{u(x_0)}-m\right)}\notag\\
&= r^{n\left(\frac1{p(x_0)}-\frac1{u(x_0)}\right)}r^{n-m} \sum_{i=1}^{\infty}2^{(i+1)\left( n- \frac n{p_\infty}+\frac n{p(x_0)}-\frac n{u(x_0)}-m\right)}\lesssim r^{n\left(\frac1{p(x_0)}-\frac1{u(x_0)}\right)}\notag.
\end{align}
\end{proof}
\begin{rem}\label{rem:FuXu}
As stated before, this convolution inequality is the basis for all the main results on the Triebel-Lizorkin-Morrey spaces which we prove in this paper, as a replacement for the vector valued Hardy-Littlewood maximal inequality \cite{FeffermannStein}. Part of the proof relies on ideas used in \cite{TangXu} and \cite{FuXu} to deal with a corresponding maximal inequality in the context of Triebel-Lizorkin-Morrey spaces, respectively with constant exponents or with variable $p$ and $u$, keeping the other exponents fixed. There is, however, a problem with the {\em proof} in \cite[Theorem 2.2]{FuXu}: the authors always used $\norm{\chi_{B_r(x)}}{L_\p(\Rn)}\approx r^{\frac n{p(x)}}$. Although this is correct in the case $0<r\leq 1$, for $r>1$ one has to use the second case in Lemma \ref{lem:xinorm}. Which in the end means that the mentioned maximal inequality was not proved.\\
Here we also do not prove such an inequality, but were able to correctly use Lemma \ref{lem:xinorm} in order to prove the convolution inequality in the above Theorem. Moreover, we could allow all exponents to vary.   
\end{rem}
\begin{rem}
We would like to discuss the somewhat complicated condition \eqref{condition_m} on $m$, namely
\begin{align*}
m>n+n\max\left(0,\sup_{x\in\Rn}\left(\frac1{p(x)}-\frac1{u(x)}\right)-\frac1{p_\infty}\right).
\end{align*} 
The theorem above holds also with the shorter condition
\begin{align}\label{condition_m2}
m>n(1+c_\infty(1/p))
\end{align}
instead, where $c_\infty(1/p)$ is the constant from \eqref{dfn:cinfty} with $g=1/p$. It is easily seen that condition \eqref{condition_m} is sharper than \eqref{condition_m2}. Both conditions on $m$ seem to have their advantages and disadvantages. It is clear that in the case of variable Lebesgue spaces, i.e. when $p(x)=u(x)$, condition \eqref{condition_m} recovers the condition $m>n$ of Lemma \ref{lem:Convinequality}. Condition \eqref{condition_m2} does not enjoy such a property.\\
On the other hand, \eqref{condition_m2} looks nicer and in the case $p(\cdot)=p$ constant it reduces to $m>n$ no matter how $u(\cdot)\geq p$ is chosen. The same behaviour can also be observed with condition \eqref{condition_m}. \\
To make our results more accessible we introduce the following abbreviation, which we shall use in the rest of the paper:
\begin{align}\label{eq:cpu}
	\cpu:=\max\left(0,\sup_{x\in\Rn}\left(\frac1{p(x)}-\frac1{u(x)}\right)-\frac1{p_\infty}\right).
\end{align}
It is easily seen that $\cpu=0$ if $\p=\u$ or when $\p=p$ constant.
\end{rem}

\section{2-microlocal Triebel-Lizorkin-Morrey spaces and their Peetre maximal function characterization}
We need admissible weight sequences and so called admissible pairs in order to define the spaces under consideration.
\begin{dfn}\label{dfn:admissiblePair}
A pair $(\check{\varphi}, \check{\Phi})$ of functions in $\Sn$ is called admissible if
\begin{align*}
\supp{\varphi}&\subset\{x\in\Rn: \frac12\leq|x|\leq2\}\text{ and }\supp\Phi\subset\{x\in\Rn:|x|\leq2\}\text{ with}
\end{align*}
\begin{align*}
|\varphi(x)|\geq c>0\text{ on }\{x\in\Rn:\frac35\leq|x|\leq\frac53\}\text{ and }
|\Phi(x)|\geq c>0\text{ on }\{x\in\Rn:|x|\leq\frac53\}.
\end{align*}
Further, we set $\varphi_j(x):=\varphi(2^{-j}x)$ for $j\geq1$ and $\varphi_0:=\Phi$. Then $(\varphi_j)_{j\in\N_0}\subset\Sn$ and it holds $\supp{\varphi_j}\subset\{x\in\Rn:2^{j-1}\leq|x|\leq2^{j+1}\}.$
\end{dfn}
\begin{dfn}\label{weight}
 Let $\alpha_1\leq\alpha_2$ and $\alpha\geq0$ be real numbers. The class of admissible weight sequences $\mgk$ is the collection of all sequences $\vek{w}=(w_j)_{j\in\N_0}$ of measurable functions $w_j$ on $\Rn$ such that
 \begin{itemize}
  \item[(i)] There exists a constant $C>0$ such that 
  \begin{align*}
   0<w_j(x)\leq C w_j(y)(1+2^j|x-y|)^\alpha\text{ for }x,y\in\Rn \text{ and }j\in\N_0; 
  \end{align*}
  \item[(ii)] For all $x\in\Rn$ and $j\in\N_0$ 
  \begin{align*}
   2^{\alpha_1}w_j(x)\leq  w_{j+1}(x)\leq2^{\alpha_2}w_j(x). 
  \end{align*}
 \end{itemize}
\end{dfn}
Now, we can give the definition of 2-microlocal Triebel-Lizorkin-Morrey spaces.
\begin{dfn}\label{def:TLM}
 Let $(\varphi_j)_{j\in\N_0}$ be constructed as in Definition \ref{dfn:admissiblePair} and $\vek{w}\in\mgk$ be admissible weights. Let  $p,q\in\Plog$ and $u\in\mathcal{P}(\Rn)$ with $0<p^-\leq p(x)\leq u(x)\leq \sup u < \infty$ and  $q^-,q^+\in(0,\infty)$. Then
 \begin{align*}
  \Mufwpqx&:=\left\{f\in\SSn:\norm{f}{\Mufwpqx}<\infty\right\}
  \intertext{where}
  \norm{f}{\Mufwpqx}&:=\norm{(w_j(\varphi_j\hat{f})^\vee)_j}{\Muplq}  \\
	&:=\norm{\left(\sum_{j=0}^\infty|w_j(\varphi_j\hat{f})^\vee|^\q\right)^{1/\q}}{\Mup}.
 \end{align*}
\end{dfn}
\begin{rem}
\begin{itemize}
\item[(i)] Some properties for $\Mufwpqx$ directly carry over from the spaces they are built up from. So it is easily seen that $\Mufwpqx$ are quasi-normed spaces. In particular, the verification that $\norm{f}{\Mufwpqx}=0$ implies $f=0$ almost everywhere can be reduced to the observation that
\begin{align*}
\norm{f}{\Mufwpqx}=0\Leftrightarrow \norm{f}{\Fwpxqx}=0\Leftrightarrow f=0\ a.e..
\end{align*} 
Furthermore, since $L_\p(\Rn)$ are normed spaces for $p\geq1$, we have that $\Mup$ are normed spaces if $\p\geq1$. And since also $\ell_\q$ spaces are normed in the case $\q\geq1$, we finally get that $\Mufwpqx$ are normed spaces when $\min(p^-,q^-)\geq1$.\\
\item[(ii)] On the other hand, to show that
\begin{align}\notag
\Sn\hookrightarrow\Mufwpqx\hookrightarrow\SSn
\end{align}
we refer to our forthcoming paper \cite{PartII} on this topic. The proofs come from the arguments leading to the atomic and molecular characterizations of the spaces $\Mufwpqx$, constituting an approach different from the one classically used, see e.g. the proof of \cite[(2.3.3/1)]{Tri83}. 
\item[(iii)] It is known that $\Sn$ is not dense even in the constant exponents Morrey space $M^u_p(\Rn)$ when $p<u$, see \cite[Proposition 2.7]{Rosenthal}. So we can not expect $\Sn$ to be dense in $\Mufwpqx$ in general.
\end{itemize}
\end{rem}
Our main results in this section are the Peetre maximal function characterization of these spaces and the consequence that their definition does not depend on the admissible pair considered, up to equivalent quasi-norms.

To that end, we start by defining the Peetre maximal function.
It was introduced by Jaak Peetre in
\cite{PeetreArt}. Given a system $(\psi_j)_{j\in\N_0}\subset\Sn$ we set
$\Psi_j=\hat\psi_j\in\Sn$ and define the Peetre maximal
function of $f\in\SSn$ for every $j\in\N_0$ and $a>0$ as
\begin{align}\notag
    (\Psi_j^*f)_a(x):=\sup_{y\in\R^n}\frac{|(\Psi_j\ast
    f)(y)|}{1+|2^j(y-x)|^a},\quad x\in\R^n.
\end{align}
We start with two given functions $\psi_0,\psi_1\in\Sn$ and define $\psi_j(x):=\psi_1(2^{-j+1}x)$, for $x\in\R^n$ and
    $j\in\N\setminus\{1\}$.
Furthermore, for all $j\in\N_0$ we write, as mentioned, $\Psi_j=\hat{\psi_j}$.\\
Now, we state the main theorem.
\begin{thm}\label{thm:lm}
    Let $\vek{w}=(w_j)_{j\in\N_0}\in\mgk$. Assume $p,q\in\Plog$ and $u\in\mathcal{P}(\Rn)$ with $0<p^-\leq p(x)\leq u(x)\leq \sup u < \infty$ and  $q^-,q^+\in(0,\infty)$. Let $R\in\N_0$ with $R>\alpha_2$ and let further $\psi_0,\psi_1$ belong to $\Sn$ with
    \begin{align}
        D^\beta\psi_1(0)=0,\quad\text{ for
        }0\leq|\beta|< R,\label{LokaleMittelMomentCond}
    \end{align}
    and
    \begin{align}
        |\psi_0(x)|&>0\quad\text{on}\quad\{x\in\R^n:|x|\leq k\varepsilon\},\label{LokaleMittelTauber1,5}\\
        |\psi_1(x)|&>0\quad\text{on}\quad\{x\in\R^n:\varepsilon \leq |x| \leq 2k\varepsilon\}\label{LokaleMittelTauber2,5}
    \end{align}
    for some $\varepsilon>0$ and $k\in(1,2]$.\\
    For  $a>n\left(\frac{1}{\min(p^-,q^-)}+\cpu\right)+\alpha$ we have that
        \begin{align}\label{eq:lm}
            \norm{f}{\Mufwpqx}\approx\norm{((\Psi_j\ast
        f)w_j)_j}{\Muplq}\approx\norm{((\Psi_j^*f)_aw_j)_j}{\Muplq}
    \end{align}
    holds for all $f\in\SSn$.
\end{thm}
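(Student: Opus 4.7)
The plan is to establish the chain of estimates
$$\norm{((\Psi_j\ast f)w_j)_j}{\Muplq}\leq\norm{((\Psi_j^*f)_a w_j)_j}{\Muplq}\lesssim\norm{f}{\Mufwpqx}\lesssim\norm{((\Psi_j\ast f)w_j)_j}{\Muplq},$$
from which \eqref{eq:lm} follows. The first inequality is trivial: evaluating the supremum defining $(\Psi_j^*f)_a(x)$ at $y=x$ yields $|\Psi_j\ast f(x)|\leq(\Psi_j^*f)_a(x)$, and monotonicity of the $\Muplq$-quasinorm in each component does the rest.

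For the central inequality $\norm{((\Psi_j^*f)_a w_j)_j}{\Muplq}\lesssim\norm{f}{\Mufwpqx}$ I would derive, for a free parameter $r>0$, a pointwise estimate of the form
$$[w_j(x)(\Psi_j^*f)_a(x)]^r\lesssim\sum_{k=0}^\infty 2^{-|k-j|\delta r}\,\eta_{j\wedge k,\,(a-\alpha)r}\ast[w_k|\check\varphi_k\ast f|]^r(x),$$
with $\eta_{\nu,m}$ as in Theorem \ref{thm:MorreyHardy} and $\delta>0$ controlled by $R-\alpha_2$. Such an estimate is obtained from a Calder\'on-type reproducing formula, built on the Tauberian conditions \eqref{LokaleMittelTauber1,5}--\eqref{LokaleMittelTauber2,5} on $(\psi_0,\psi_1)$ together with the support properties of $(\check\varphi,\check\Phi)$ from Definition \ref{dfn:admissiblePair}: one expands $\Psi_j\ast f=\sum_k T_{j,k}\ast(\check\varphi_k\ast f)$ with kernels $T_{j,k}\in\Sn$. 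The moment condition \eqref{LokaleMittelMomentCond} produces decay of $T_{j,k}$ of order $2^{-(k-j)R}$ for $k>j$, and a Nikol'skii-type argument on the compactly supported spectrum of $\check\varphi_k$ gives arbitrarily strong decay for $k\leq j$. Dividing by $1+|2^j(x-y)|^a$ and taking the supremum in $y$ produces the kernel $\eta_{j\wedge k,ar}$; then the admissibility estimate from Definition \ref{weight}(i) lets one replace $w_j(x)$ by $w_k(y)$ at the cost of a factor $(1+2^{j\wedge k}|x-y|)^\alpha 2^{|k-j|\alpha_*}$ with $\alpha_*:=\max(|\alpha_1|,\alpha_2)$, which is absorbed into $\eta_{j\wedge k,(a-\alpha)r}$ and into the geometric factor with $\delta:=\min(R-\alpha_2,N')>0$ (the strict inequality $R>\alpha_2$ is used exactly here).

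To convert this pointwise bound into a norm estimate, I would pick $r\in(0,\min(p^-,q^-))$ close enough to $\min(p^-,q^-)$ so that $p/r,q/r\in\Plog$ with $(p/r)^-,(q/r)^->1$ and so that $m:=(a-\alpha)r>n(1+r\,\cpu)$, i.e.\ condition \eqref{condition_m} holds for the exponents $p/r,u/r$; the hypothesis $a>n(1/\min(p^-,q^-)+\cpu)+\alpha$ is tailored precisely to make all these choices simultaneously possible. Using \eqref{eq:Lp/t} in the form $\norm{g^r}{M^{u/r}_{p/r}(\Rn)}=\norm{g}{\Mup}^r$ (and its $\ell_{q/r}$ analogue), the $r$-th power of the left-hand norm is dominated by an $M^{u/r}_{p/r}(\ell_{q/r})$ norm of the right-hand sum. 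A discrete H\"older/Hardy reduction in $k$ uses the geometric factor $2^{-|k-j|\delta r}$ to collapse the double sum into a single sum of the form $\sum_k\eta_{k,m}\ast[w_k|\check\varphi_k\ast f|]^r$, at which point Theorem \ref{thm:MorreyHardy} applied with exponents $p/r,u/r,q/r$ and order $m$ gives the bound by $\norm{(w_k|\check\varphi_k\ast f|)_k}{M^{u/r}_{p/r}(\ell_{q/r})}=\norm{f}{\Mufwpqx}^r$.

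The remaining inequality $\norm{f}{\Mufwpqx}\lesssim\norm{((\Psi_j\ast f)w_j)_j}{\Muplq}$ is obtained by running the same argument with $(\psi_0,\psi_1)$ and $(\check\varphi,\check\Phi)$ swapping roles. The pair $(\check\varphi,\check\Phi)$ already satisfies the Tauberian hypotheses by Definition \ref{dfn:admissiblePair}, and the moment condition \eqref{LokaleMittelMomentCond} is automatic for any $R$ since $\supp\varphi\subset\{1/2\leq|x|\leq 2\}$ forces $D^\beta\varphi(0)=0$ for every multi-index $\beta$. The main obstacle in executing the plan is the careful derivation of the pointwise estimate and the parameter bookkeeping, making sure that the lower bound on $a$ in the statement is exactly what is needed to realize, for some admissible $r$, both $(p/r)^-,(q/r)^->1$ and condition \eqref{condition_m} for $p/r,u/r$ at order $m=(a-\alpha)r$.
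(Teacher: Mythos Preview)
Your proposal is correct and uses the same essential ingredients as the paper---the convolution inequality of Theorem~\ref{thm:MorreyHardy}, the Hardy-type Lemma~\ref{lem:Hardy}, and Rychkov/Ullrich-style pointwise estimates---but you package them differently. The paper does \emph{not} go directly from $(\Psi_j^*f)_a$ to $|\check\varphi_k\ast f|$ in a single $r$-th-power pointwise estimate. Instead it factors the argument through two auxiliary results: Theorem~\ref{thm:ComparisonLM}, which compares the Peetre maximal functions of two \emph{different} systems via the linear pointwise bound $(\Psi_\nu^*f)_a w_\nu\lesssim\sum_j 2^{-|j-\nu|\delta}(\Phi_j^*f)_a w_j$ (no power $r$, no $\eta$-convolution, only Lemma~\ref{lem:Hardy} needed, and valid for arbitrary $p,q,u\in\mathcal P(\Rn)$ with $p\le u$ and any $a>0$); and Theorem~\ref{thm:boundednessLM}, which bounds the maximal function by the convolution for the \emph{same} system, producing a one-sided sum $\sum_{j\ge\nu}$ with $\eta_j$ already carrying the matching index, so that Theorem~\ref{thm:MorreyHardy} applies without any index bookkeeping. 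Your combined estimate with $\eta_{j\wedge k}$ is viable, but when $k>j$ the kernel is $\eta_j$, not $\eta_k$, so Lemma~\ref{lem:Hardy} does not immediately collapse the sum to the form you claim; you must first pull the $k$-sum inside the convolution and apply Theorem~\ref{thm:MorreyHardy} at index $j$, then use Lemma~\ref{lem:Hardy}, reversing the order for $k\le j$. The paper's two-step route avoids this juggling and, as a bonus, yields Theorem~\ref{thm:ComparisonLM} as a result of independent interest under minimal hypotheses.
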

The proof relies on \cite{Rychkov} and \cite{Ull12} and it will briefly be given after some preparations. We outline the necessary changes to the proof in \cite{Kempka09} (cf. also \cite{AC16}, where the version with the $k$ was introduced), which are small, since the proof technique works mainly with pointwise estimates.

Notice that Theorem \ref{thm:lm} contains the conclusion that the 2-microlocal Morreyfied spaces of variable exponents given in Definition \ref{def:TLM} are independent of the admissible pair considered.

Before proving the theorem we state some useful lemmas.

The first one essentially states that the spaces $\Mup$ satisfy the lattice property, this being an easy consequence of the corresponding property for the spaces $L_\p$.
\begin{lem}
Let $f$ and $g$ be two measurable functions with $0\leq f(x)\leq g(x)$ for a.e. $x\in\Rn$.
Then it holds
\begin{align*}
\norm{f}{\Mup}\leq\norm{g}{\Mup}.
\end{align*}
\end{lem}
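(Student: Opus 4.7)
The plan is to reduce this directly to the corresponding lattice property of the variable Lebesgue spaces $\Lp$ and then take the supremum defining the Morrey norm. There is essentially no obstacle here; the work is just unpacking definitions.

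First I would establish the lattice property of $L_\p$ itself. Given $0 \leq f(x) \leq g(x)$ a.e.\ and any $\lambda > 0$, the pointwise monotonicity of the function $\phi_{p(x)}(\cdot)$ (visible from its piecewise definition: $t \mapsto t^{p(x)}$ is nondecreasing on $[0,\infty)$, and the $p(x)=\infty$ branch is also nondecreasing in $t$) yields $\phi_{p(x)}(f(x)/\lambda) \leq \phi_{p(x)}(g(x)/\lambda)$ almost everywhere, hence after integration $\varrho_\p(f/\lambda) \leq \varrho_\p(g/\lambda)$. Therefore every $\lambda > 0$ with $\varrho_\p(g/\lambda) \leq 1$ also satisfies $\varrho_\p(f/\lambda) \leq 1$, and taking the infimum over such $\lambda$ gives $\norm{f}{\Lp} \leq \norm{g}{\Lp}$. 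The same reasoning, applied to $f \chi_{B_r(x)}$ and $g \chi_{B_r(x)}$ (which still satisfy the pointwise inequality a.e.), yields
\begin{equation*}
\norm{f}{L_\p(B_r(x))} \leq \norm{g}{L_\p(B_r(x))} \quad \text{for every } x \in \Rn,\ r > 0.
\end{equation*}

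Multiplying both sides by the nonnegative factor $r^{n(1/u(x)-1/p(x))}$ and then taking the supremum over $x \in \Rn$ and $r > 0$ gives $\norm{f}{\Mup} \leq \norm{g}{\Mup}$, as claimed. The only subtlety worth flagging is that one must invoke the extended definition of $\Lp$ via \eqref{eq:Lp/t} in the range $0 < p^- \leq 1$, where $\norm{\cdot}{\Lp}$ is only a quasi-norm; but the lattice argument is insensitive to this since monotonicity of $\phi_{p(x)}(\cdot)$ is the only ingredient used, and that property is unaffected by the rescaling $p \mapsto p/t$.
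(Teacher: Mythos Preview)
Your proof is correct and follows exactly the approach the paper indicates: the lemma is stated there without a written proof, merely as ``an easy consequence of the corresponding property for the spaces $L_\p$'', which is precisely what you spell out. Your treatment is just a more detailed version of the same argument.
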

The next easy lemma shows some homogeneity in the exponents of the spaces.
\begin{lem}\label{lem:ttrick}
Let $p,q,u\in\mathcal{P}(\Rn)$ with $p\leq u$ and $0<t<\infty$. Then for any sequence $(f_\nu)_{\nu\in\N_0}$ of measurable functions it holds
\begin{align*}
\norm{(|f_\nu|^t)_\nu}{M_{\frac\p t}^{\frac\u t}(\ell_{\frac \q t})}&=\norm{\left(\sum_{\nu=0}^\infty|f_\nu|^\q\right)^{t/\q}}{M_{\frac\p t}^{\frac\u t}(\Rn)}\\
&=\norm{(f_\nu)_\nu}{\Muplq}^t,
\end{align*}
with the usual modification every time $q(x)=\infty$.
\end{lem}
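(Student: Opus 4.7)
My plan is to prove the two equalities separately and in order, since both are essentially unwindings of definitions combined with the scaling identity \eqref{eq:Lp/t}.

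For the first equality, I would just unfold the definition of $M_{p/t}^{u/t}(\ell_{q/t})$ applied to the sequence $(|f_\nu|^t)_\nu$. By definition,
\[
\norm{(|f_\nu|^t)_\nu}{M_{\p/t}^{\u/t}(\ell_{\q/t})}
=\norm*{\left(\sum_{\nu=0}^\infty (|f_\nu(\cdot)|^t)^{q(\cdot)/t}\right)^{t/q(\cdot)}}{M_{\p/t}^{\u/t}(\Rn)},
\]
and $(|f_\nu|^t)^{q(\cdot)/t}=|f_\nu|^{q(\cdot)}$ pointwise. The modification at points where $q(x)=\infty$ is built into the definition of the $\ell_\q$-quasi-norm (pointwise supremum), and it is compatible with the power $t$ because $(\sup_\nu |f_\nu|^t)^{1/t}=\sup_\nu |f_\nu|$ for $t>0$, so the first identity drops out.

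For the second equality, set $F(x):=\left(\sum_{\nu=0}^\infty |f_\nu(x)|^{q(x)}\right)^{1/q(x)}$ (with the obvious modification when $q(x)=\infty$), so that the middle expression equals $\norm{F^t}{M_{\p/t}^{\u/t}(\Rn)}$ and the right-hand side equals $\norm{F}{\Mup}^t$. Applying \eqref{eq:Lp/t} with the exponent $p(\cdot)$ to the function $F\chi_{B_r(x_0)}\geq 0$ yields
\[
\norm{F^t}{L_{\p/t}(B_r(x_0))}
=\norm{(F\chi_{B_r(x_0)})^t}{L_{\p/t}(\Rn)}
=\norm{F\chi_{B_r(x_0)}}{L_{\p}(\Rn)}^t
=\norm{F}{L_{\p}(B_r(x_0))}^t
\]
for every $x_0\in\Rn$, $r>0$. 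Multiplying by $r^{n(t/u(x_0)-t/p(x_0))}=\left(r^{n(1/u(x_0)-1/p(x_0))}\right)^t$ and taking the supremum over $x_0$ and $r$, I use that for $t>0$ the map $s\mapsto s^t$ is monotone on $[0,\infty]$, so $\sup_{x_0,r}\bigl(\cdots\bigr)^t=\bigl(\sup_{x_0,r}\cdots\bigr)^t$; this gives $\norm{F^t}{M_{\p/t}^{\u/t}(\Rn)}=\norm{F}{\Mup}^t$.

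There is no real obstacle here; the only mildly delicate points are making sure that the power $t$ commutes with the suprema (both the Morrey supremum over balls and, in the $q(x)=\infty$ case, the $\ell_\infty$-supremum) and that the exponent arithmetic $\frac{t/u}{\cdot}-\frac{t/p}{\cdot}=t\left(\frac1{u(\cdot)}-\frac1{p(\cdot)}\right)$ is performed correctly. Both are immediate because $t$ is a positive constant and $F\geq 0$.
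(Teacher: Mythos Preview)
Your argument is correct and is precisely the straightforward unwinding of definitions the paper has in mind; the paper itself does not spell out a proof, merely labeling the lemma as ``easy'', and your two steps (pointwise exponent arithmetic for the first equality, the scaling identity \eqref{eq:Lp/t} together with commuting the power $t$ past the supremum for the second) constitute exactly that easy verification.
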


The proof of the next lemma works exactly as in \cite[Lemma 4.2]{Kempka09} using now the lattice property of $\Mup$.
\begin{lem}\label{lem:Hardy}
 Let $p,q,u\in\mathcal{P}(\Rn)$ with $p\leq u$. For any sequence $(g_j)_{j\in\N_0}$ of non negative measurable functions we denote, for $\delta>0$,
 \begin{align*}
  G_k(x)=\sum_{j=0}^\infty2^{-|k-j|\delta}g_j(x), \quad x \in \Rn, \quad k\in \N_0.
 \end{align*}
 Then it holds
 \begin{align*}
  \norm{(G_k)_k}{\Muplq}\leq c(\delta,q)\norm{(g_j)_j}{\Muplq},\text{ where}\\
  c(\delta,q)=\max\left(\sum_{j\in\mathbb{Z}}2^{-|j|\delta},\left[\sum_{j\in\mathbb{Z}}2^{-|j|\delta q^-}\right]^{1/q^-}\right).
 \end{align*}
\end{lem}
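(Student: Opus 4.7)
The strategy is to reduce the claimed $\Muplq$-estimate to a purely pointwise inequality of the form
\begin{equation*}
\left(\sum_{k=0}^\infty G_k(x)^{q(x)}\right)^{1/q(x)} \leq c(\delta,q)\left(\sum_{j=0}^\infty g_j(x)^{q(x)}\right)^{1/q(x)} \quad \text{for a.e. } x\in\Rn,
\end{equation*}
after which the monotonicity/lattice property of $\Mup$ (the lemma immediately preceding this one) applied with $f(x)=\left(\sum_k G_k(x)^{q(x)}\right)^{1/q(x)}$ and $g(x)=c(\delta,q)\left(\sum_j g_j(x)^{q(x)}\right)^{1/q(x)}$ yields the conclusion. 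So everything boils down to the scalar convolution-type Hardy inequality for each fixed $x$, with a constant uniform in $x$.

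To prove the pointwise bound I would fix $x\in\Rn$ and distinguish two cases according to the value $q(x)$. If $q(x)\geq 1$, apply H\"older's inequality to the defining sum
\begin{equation*}
G_k(x)=\sum_{j=0}^\infty 2^{-|k-j|\delta/q'(x)}\,2^{-|k-j|\delta/q(x)}g_j(x)
\end{equation*}
to obtain $G_k(x)^{q(x)}\leq C_1^{q(x)-1}\sum_j 2^{-|k-j|\delta}g_j(x)^{q(x)}$, where $C_1:=\sum_{\ell\in\Z}2^{-|\ell|\delta}$; summing in $k$ and switching the order of summation gives $\sum_k G_k(x)^{q(x)}\leq C_1^{q(x)}\sum_j g_j(x)^{q(x)}$, hence the desired inequality with constant $C_1$. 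If instead $q(x)<1$, use the elementary subadditivity $(a+b)^{q(x)}\leq a^{q(x)}+b^{q(x)}$ (valid iteratively for nonnegative sums when the exponent is in $(0,1]$) to get $G_k(x)^{q(x)}\leq\sum_j 2^{-|k-j|\delta q(x)}g_j(x)^{q(x)}$; summing in $k$ and swapping the order then bounds $\sum_k G_k(x)^{q(x)}$ by $\bigl(\sum_{\ell\in\Z}2^{-|\ell|\delta q(x)}\bigr)\sum_j g_j(x)^{q(x)}$. Since $q(x)\geq q^-$, the geometric sum is controlled by $C_2:=\sum_{\ell\in\Z}2^{-|\ell|\delta q^-}\geq 1$, and taking the $1/q(x)$-th power yields $C_2^{1/q(x)}\leq C_2^{1/q^-}$ (using $q(x)\geq q^-$ and $C_2\geq 1$). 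In both cases the constant is dominated by
\begin{equation*}
c(\delta,q)=\max\!\left(\sum_{\ell\in\Z}2^{-|\ell|\delta},\ \Bigl[\sum_{\ell\in\Z}2^{-|\ell|\delta q^-}\Bigr]^{1/q^-}\right),
\end{equation*}
which is exactly the constant in the statement.

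The only mildly subtle point is that the case split $q(x)\geq 1$ vs.\ $q(x)<1$ is done pointwise, so the two bounds must be combined into a single constant valid for every $x$; this is handled by taking the maximum, and note that when $q^-\geq 1$ the second branch is vacuous while when $q^-<1$ the second branch dominates on $\{x:q(x)<1\}$. No smoothness assumption on $q$ (beyond $q^->0$) is used and the argument is independent of $p$ and $u$, which only intervene through the lattice property of $\Mup$ in the final step. I expect no serious obstacle: the entire argument is an exercise in scalar Hardy-type convolution estimates, the convergence of $\sum_{\ell\in\Z}2^{-|\ell|\delta q^-}$ being guaranteed by the hypothesis $\delta>0$ together with $q^->0$.
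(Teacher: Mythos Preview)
Your proposal is correct and follows essentially the same approach as the paper, which simply refers to \cite[Lemma 4.2]{Kempka09} and notes that the only change needed is to invoke the lattice property of $\Mup$ in place of that of $L_\p$. What you have written is precisely the standard pointwise Hardy-type argument underlying that reference (H\"older for $q(x)\geq 1$, subadditivity for $q(x)<1$), followed by the lattice step; there is nothing to add.
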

Now, we are ready to prove Theorem \ref{thm:lm}. As with the non-Morreyfied spaces, we divide the proof into two separate assertions, from which we can then conclude. The first one compares two different Peetre maximal functions with two sets of start functions $\psi_0,\psi_1\in\Sn$ and $\phi_0,\phi_1\in\Sn$. As before we introduce the dilations
\begin{equation*}
\psi_j(x)=\psi_1(2^{-j+1}x)\text{ and }\phi_j(x)=\phi_1(2^{-j+1}x), \quad x \in \Rn, \quad j\in \N\setminus\{1\}, 
\end{equation*}\\[-1mm]
as well as $\Psi_j= \hat{\psi}_j\text{ and }\Phi_j=\hat{\phi}_j\text{ for all }j\in\N_0$.\\
\begin{thm}\label{thm:ComparisonLM}
Let $\vek{w}=(w_j)_{j\in\N_0}\in\mgk$ and $p,q,u\in\mathcal{P}(\Rn)$ with $p\leq u$. Let $a>0$ and $R\in\N_0$ with $R>\alpha_2$. Let further $\psi_0, \psi_1$ belong to $\Sn$ with
    \begin{align*}
        D^\beta\psi_1(0)=0,\quad0\leq|\beta|<R
    \end{align*}
    and $\phi_0, \phi_1$ belong to $\Sn$ with
    \begin{align*}
        |\phi_0(x)|&>0\quad\text{on}\quad\{x\in\R^n:|x|\leq k\varepsilon\},\\
        |\phi_1(x)|&>0\quad\text{on}\quad\{x\in\R^n:\varepsilon\leq |x|\leq 2k\varepsilon\}
    \end{align*}
		for some $\varepsilon>0$ and $k\in(1,2]$.
    Then
    \begin{align*}
        \norm{((\Psi_j^*f)_aw_j)_j}{\Muplq}\leq
        c\norm{((\Phi_j^*f)_aw_j)_j}{\Muplq}.
    \end{align*}
    holds for every $f\in\SSn$.
\end{thm}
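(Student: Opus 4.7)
The plan is to establish a pointwise inequality of the form
\begin{equation*}
w_j(x)(\Psi_j^*f)_a(x) \le c \sum_{\ell=0}^\infty 2^{-|j-\ell|\delta}\, w_\ell(x)(\Phi_\ell^*f)_a(x), \qquad x\in\Rn,\ j\in\N_0,
\end{equation*}
for some $\delta>0$, and then to apply the lattice property of $\Muplq$ together with Lemma \ref{lem:Hardy} with $g_\ell(x):=w_\ell(x)(\Phi_\ell^*f)_a(x)$. Since the entire argument below is pointwise except for this one invocation of Lemma \ref{lem:Hardy}, no regularity on $\p,\q,\u$ beyond $p\le u$ is needed here, and the passage from the variable Lebesgue setting of \cite{Kempka09,AC16} to the Morrey setting is essentially cosmetic.

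For the pointwise bound I would follow Rychkov's Calderón-type reproducing strategy, already carried out in the 2-microlocal variable-exponent framework in \cite{Kempka09,AC16}. The Tauberian conditions on $\phi_0,\phi_1$ provide, by a standard construction, $\lambda_0,\lambda_1\in\Sn$ with $\supp\lambda_0\subset\{|\xi|\le 2k\varepsilon\}$ and $\supp\lambda_1\subset\{\varepsilon/2\le|\xi|\le 4k\varepsilon\}$, so that, with $\lambda_\ell(\xi):=\lambda_1(2^{-\ell+1}\xi)$ for $\ell\ge 2$, the identity $\sum_{\ell=0}^\infty \lambda_\ell \phi_\ell \equiv 1$ holds. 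Translating this into physical space gives $\sum_{\ell=0}^\infty \Lambda_\ell * \Phi_\ell = c\,\delta_0$ in $\SSn$, with $\Lambda_\ell:=\hat{\lambda_\ell}$, hence
\begin{equation*}
\Psi_j*f = c\sum_{\ell=0}^\infty(\Psi_j*\Lambda_\ell)*(\Phi_\ell*f).
\end{equation*}
The moment condition $D^\beta\psi_1(0)=0$ for $|\beta|<R$, together with the annular Fourier support of $\lambda_\ell$ for $\ell\ge 1$, then produces the standard pointwise kernel estimate
\begin{equation*}
|(\Psi_j*\Lambda_\ell)(z)| \le c_M\, 2^{-|j-\ell|R}\, 2^{\min(j,\ell)n}(1+2^{\min(j,\ell)}|z|)^{-M}
\end{equation*}
for any preassigned $M>0$, exactly as in \cite[proof of Theorem 3.1]{Kempka09}.

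From here, using $|(\Phi_\ell*f)(y)|\le (1+2^\ell|y-x|)^a(\Phi_\ell^*f)_a(x)$ by definition of the Peetre maximal function, inserting the kernel estimate into the reproducing formula, and absorbing both the factor $(1+2^j|y-x|)^a$ hidden in $(\Psi_j^*f)_a(x)$ and the weight tail $(1+2^j|x-y|)^\alpha$ from Definition \ref{weight}(i) into the parameter $M$ (which can be chosen arbitrarily large), one reaches
\begin{equation*}
(\Psi_j^*f)_a(x) \le c\sum_{\ell=0}^\infty 2^{-|j-\ell|R}(\Phi_\ell^*f)_a(x).
\end{equation*}
Multiplying by $w_j(x)$ and exchanging it against $w_\ell(x)$ via Definition \ref{weight}(ii), at the cost of at most $2^{|j-\ell|\alpha_2}$, gives the target pointwise inequality with $\delta:=R-\alpha_2>0$. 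This last weight shift is the key obstacle: it is precisely why the hypothesis $R>\alpha_2$ cannot be dropped, since otherwise the remaining decay would fail to be summable and the final application of Lemma \ref{lem:Hardy} would collapse. Everything else is a pointwise calculation inherited verbatim from \cite{Kempka09,AC16}; the Morrey structure enters only through the single closing invocation of Lemma \ref{lem:Hardy} in $\Muplq$.
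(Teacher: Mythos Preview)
Your approach is the same as the paper's: obtain the pointwise Rychkov-type inequality
\[
w_\nu(x)(\Psi_\nu^*f)_a(x)\le c\sum_{j=0}^\infty 2^{-|j-\nu|\delta}\,w_j(x)(\Phi_j^*f)_a(x)
\]
via the Calder\'on reproducing formula built from the Tauberian conditions on $\phi_0,\phi_1$, and then close with a single application of Lemma~\ref{lem:Hardy} in $\Muplq$; the paper records $\delta=\min(1,R-\alpha_2)$ (inherited from \cite[(21)]{Kempka09}) rather than your $\delta=R-\alpha_2$, but any positive $\delta$ suffices. One small slip to fix when you write it up: the weight exchange via Definition~\ref{weight}(ii) costs $2^{(j-\ell)\alpha_2}$ for $j>\ell$ but $2^{-(\ell-j)\alpha_1}$ for $\ell>j$, not uniformly $2^{|j-\ell|\alpha_2}$; this is harmless because in the direction $\ell>j$ the annular support of $\lambda_\ell$ gives kernel decay $2^{-(\ell-j)M}$ for any $M$, so the two directions should be treated separately (as in \cite{Kempka09}) rather than symmetrized.
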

\begin{proof}
The proof follows the lines of the proof of \cite[Theorem 3.6]{Kempka09} up to equation (21) of that paper, except that, because of the $k$ considered above, a different sequence $(\lambda_j)_{j\in\N_0}$ should be used there, similarly as in the proof of \cite[Theorem 3.1]{AC16}. Since this was not detailed in the latter paper, for the convenience of the reader we explain here how the modified $(\lambda_j)_{j\in\N_0}$ to be used in the proof of \cite[Theorem 3.6]{Kempka09} is constructed.

Consider a function $\Theta_0 \in \Sn$ such that for some fixed $\delta_1\in[0,1)$
\begin{align*}
\Theta_0(x)&=1 \quad \mbox{if }\; |x|\leq 1+\delta_1\text{ and}\\
\Theta_0(x)&=0 \quad \mbox{if }\; |x|\geq 1+\delta_2
\end{align*}
for some fixed $\delta_2\in(\delta_1,1]$, which is radially strictly decreasing for $|x|\in[1+\delta_1,1+\delta_2]$. Defining
$$\Theta_1(x):=\Theta_0(2^{-1}x)-\Theta_0(x)\text{, for } \quad x \in \Rn,$$
one has that ${\rm supp}\, \Theta_1 \subset \{ x \in \Rn: 1+\delta_1 \leq |x| \leq 2(1+\delta_2)\}$
(actually, $\,\Theta_1(x)=0\,$ iff $\,|x|\leq 1+\delta_1 \, \text{ or } \, |x|\geq 2(1+\delta_2)$). Further, we set
\begin{align*}
\Theta_j(x):=\Theta_1(2^{-j+1}x)&=\Theta_0(2^{-j}x)-\Theta_0(2^{-j+1}x),\quad x\in \Rn,\;\; j \in \N\setminus\{1\}\\
\text{and obtain}\quad\sum_{j=0}^\infty \Theta_j(x)&=1 \quad \text{for all }\; x \in \Rn.
\end{align*}

For each $k\in(1,2]$, we are going to consider such a function $\Theta_0$ for $\delta_1$ and $\delta_2$ above chosen in such a way that $k=\frac{1+\delta_2}{1+\delta_1}$ (notice that the range of $\frac{1+\delta_2}{1+\delta_1}$,  for $\delta_1, \delta_2$ as mentioned, is precisely the set $(1,2]$) and then define the sequence of functions $\lambda_j\in\Sn$ in the following way:
$$\begin{array}{ll}
j=0: & \lambda_0(x):=0 \quad \mbox{if }\; |x|\geq k\varepsilon;\\
     & \lambda_0(x):=\frac{\Theta_0\left(\frac{1+\delta_1}{\varepsilon}x\right)}{\phi_0(x)} \quad \mbox{if }\; |x|<k\varepsilon.\\[3mm]
j\in\N: & \lambda_j(x):=0 \quad \mbox{if }\; |2^{-j+1}x|\leq \varepsilon \; \text{ or } \; |2^{-j+1}x|\geq 2k\varepsilon;\\
        & \lambda_j(x):=\frac{\Theta_j\left(\frac{1+\delta_1}{\varepsilon}x\right)}{\phi_j(x)} \quad \mbox{if }\; \varepsilon<|2^{-j+1}x|<2k\varepsilon.
\end{array}$$
Notice that this makes sense, in view of the hypotheses on $\phi_0, \phi_1$. Notice also that, by construction,
\begin{equation}
{\rm supp}\, \lambda_0 \subset \{ x \in\Rn: |x| \leq k \varepsilon \}, \quad {\rm supp\,} \lambda_1 \subset \{ x \in \Rn: \varepsilon \leq |x| \leq 2k\varepsilon \},
\label{eq:*}
\end{equation}
\begin{equation}
\lambda_j(x) = \lambda_1(2^{-j+1}x), \quad x \in \Rn, \;\; j\in\N,
\label{eq:**}
\end{equation}
and
\begin{equation}
\sum_{j=0}^\infty \lambda_j(x) \phi_j(x) = 1, \quad x \in\Rn.
\label{eq:***}
\end{equation}

This corresponds to (17), (16) and (15) in the proof of \cite[Theorem 3.6]{Kempka09}. One gets exactly the mentioned expressions in that paper by particularizing $k$ to $2$ and by making our $\varepsilon>0$ here equal to half the $\varepsilon>0$ used there. We can then essentially follow the remainder of the proof of \cite[Theorem 3.6]{Kempka09}, with the more general sequence $(\lambda_j)_{j\in\N_0}$ considered above, up to the derivation of formula (21) in that paper, namely
\begin{align*}
(\Psi_\nu^*f)_a(x)w_\nu(x)\leq
        c\sum_{j=0}^\infty2^{-|j-\nu|\delta}(\Phi_j^*f)_a(x)w_j(x), \quad x \in \Rn,
\end{align*} 
with $\delta:=\min(1,R-\alpha_2)$. Now, we apply Lemma \ref{lem:Hardy} and the theorem is proved.
\end{proof}
Now for the second assertion needed in order to prove Theorem \ref{thm:lm}. We use the same notation introduced before.
\begin{thm}\label{thm:boundednessLM}
Let $\vek{w}=(w_j)_{j\in\N_0}\in\mgk$. Assume $p,q\in\Plog$ and $u\in\mathcal{P}(\Rn)$ with $0<p^-\leq p(x)\leq u(x)\leq \sup u < \infty$ and  $q^-,q^+\in(0,\infty)$. Let $\psi_0,\psi_1$ belong to $\Sn$ with
    \begin{align*}
        |\psi_0(x)|&>0\quad\text{on}\quad\{x\in\R^n:|x|\leq k\varepsilon\},\\
        |\psi_1(x)|&>0\quad\text{on}\quad\{x\in\R^n:\varepsilon\leq |x|\leq 2k\varepsilon\}
    \end{align*}
		for some $\varepsilon>0$ and $k\in(1,2]$.
		
    \noindent If $a>n\left(\frac{1}{\min(p^-,q^-)}+\cpu\right)+\alpha$, then
        \begin{align}\label{eq:boundednessLM}
            \norm{((\Psi_j^*f)_aw_j)_j}{\Muplq}\leq c\norm{((\Psi_j\ast f)w_j)_j}{\Muplq}
        \end{align}
    holds for all $f\in\SSn$.
\end{thm}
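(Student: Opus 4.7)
The plan is to reduce the bound to Theorem \ref{thm:MorreyHardy}, mirroring the scheme of \cite[Theorem 3.7]{Kempka09} and \cite[Theorem 3.1]{AC16} but with the vector-valued Hardy--Littlewood maximal inequality there replaced by the new Morrey convolution inequality of Theorem \ref{thm:MorreyHardy}. The argument splits into a purely pointwise step and a norm step based on the homogeneity trick of Lemma \ref{lem:ttrick}.

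\textbf{Pointwise step.} Exploiting the Tauberian conditions on $\psi_0, \psi_1$ (with the parameter $k\in(1,2]$ handled exactly as in the construction of the auxiliary sequence $(\lambda_j)$ in the proof of Theorem \ref{thm:ComparisonLM}), together with the hypothesis $R>\alpha_2$ and the regularity of the weights $\vek{w}$ from Definition \ref{weight}(i)--(ii), one obtains for any $r>0$ and some $\sigma>0$ depending on $R-\alpha_2$ a pointwise bound of Rychkov--Kempka type,
\begin{equation}
\label{ptw_bddness}
\bigl((\Psi_j^* f)_a(x)\, w_j(x)\bigr)^r \leq c \sum_{k=0}^\infty 2^{-|k-j|\sigma r}\, \bigl(\eta_{k,(a-\alpha)r} \ast (|\Psi_k \ast f|\, w_k)^r\bigr)(x),
\end{equation}
valid for all $x\in\Rn$ and $j\in\N_0$. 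This derivation is pure pointwise analysis and carries over essentially verbatim from the cited works.

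\textbf{Norm step.} Since by hypothesis $a-\alpha-n\,\cpu > n/\min(p^-,q^-)$, one can fix $r$ satisfying
$$\frac{n}{a-\alpha-n\,\cpu} \;<\; r \;<\; \min(p^-,q^-).$$
Then $(p/r)^-,(q/r)^->1$, one still has $p/r,q/r\in\Plog$ (invariance of $\Plog$ under multiplication by a positive constant), and $p/r\leq u/r$ with $\sup(u/r)<\infty$; moreover $m:=(a-\alpha)r$ fulfills $m>n+nr\,\cpu$, which is precisely condition \eqref{condition_m} for the rescaled triple $(p/r,q/r,u/r)$, since $\cpu$ scales linearly in $r$ when $p,u$ are replaced by $p/r,u/r$. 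So Theorem \ref{thm:MorreyHardy} applies in the rescaled spaces $M_{p/r}^{u/r}(\ell_{q/r})$ with kernel $\eta_{k,m}$.

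Combining \eqref{ptw_bddness} with Lemma \ref{lem:ttrick}, then Lemma \ref{lem:Hardy} applied to the sequence $(\eta_{k,m}\ast (|\Psi_k\ast f|w_k)^r)_k$ with $\delta=\sigma r>0$, and finally Theorem \ref{thm:MorreyHardy} for the rescaled exponents, one obtains
\begin{align*}
\norm{((\Psi_j^* f)_a\, w_j)_j}{\Muplq}^r
&= \norm{\bigl(((\Psi_j^* f)_a\, w_j)^r\bigr)_j}{M_{p/r}^{u/r}(\ell_{q/r})} \\
&\lesssim \norm{\bigl(\eta_{k, m} \ast (|\Psi_k \ast f|\, w_k)^r\bigr)_k}{M_{p/r}^{u/r}(\ell_{q/r})} \\
&\lesssim \norm{\bigl((|\Psi_k \ast f|\, w_k)^r\bigr)_k}{M_{p/r}^{u/r}(\ell_{q/r})}
= \norm{((\Psi_k \ast f)\, w_k)_k}{\Muplq}^r.
\end{align*}
Taking $r$-th roots yields \eqref{eq:boundednessLM}.

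\textbf{Main obstacle.} The delicate point is the tight choice of $r$: the lower bound on $a$ in the hypothesis has been calibrated precisely so that $r$ can be chosen simultaneously below $\min(p^-,q^-)$ (to enable the $t$-trick of Lemma \ref{lem:ttrick}) and above $n/(a-\alpha-n\,\cpu)$ (to ensure condition \eqref{condition_m} for the rescaled exponents). The $\cpu$-contribution is inherited directly from Theorem \ref{thm:MorreyHardy} and is exactly what permits us to avoid a restriction of the type \eqref{eq:Horest} that was imposed in the earlier works of Ho and of Yang--Yuan--Zhuo.
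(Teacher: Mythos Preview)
Your overall scheme---a pointwise Rychkov/Ullrich-type estimate followed by the $t$-trick (Lemma \ref{lem:ttrick}), then Lemma \ref{lem:Hardy}, then Theorem \ref{thm:MorreyHardy} applied to the rescaled exponents $p/r,q/r,u/r$---is exactly what the paper does. Your norm step is correct, including the observation that $\cpu$ scales by $r$ under $p\mapsto p/r$, $u\mapsto u/r$, which is precisely what makes the choice of $r$ work.

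However, there is a genuine slip in your pointwise step. You invoke ``the hypothesis $R>\alpha_2$'' and claim that the decay rate $\sigma$ in \eqref{ptw_bddness} depends on $R-\alpha_2$. But Theorem \ref{thm:boundednessLM} carries \emph{no} moment condition on $\psi_1$ and no parameter $R$ at all---only the Tauberian conditions appear in its hypotheses. You have conflated this theorem with Theorem \ref{thm:ComparisonLM}. In the present setting the decay in the pointwise inequality does not come from vanishing moments; it comes from a \emph{free} parameter $N\in\N_0$ in the Ullrich estimate (see \cite[(2.48)]{Ull12} as used in the paper): for every $t>0$ and every $N\in\N_0$,
\[
((\Psi^\ast_\nu f)_a(x)\,w_\nu(x))^t \lesssim \sum_{j=\nu}^\infty 2^{-(j-\nu)(N-a+\alpha_1)t}\,\bigl(\eta_{j,(a-\alpha)t}\ast(|\Psi_j\ast f|w_j)^t\bigr)(x),
\]
after multiplying by $(w_\nu(x))^t$ and using the weight properties. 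One then simply picks $N>a+|\alpha_1|$ to make the exponent positive. So the conclusion of your pointwise step is correct, but the justification you give (and the claimed dependence of $\sigma$ on $R-\alpha_2$) is not; replace it by the free choice of $N$.
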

\begin{proof}
As in the last proof, we find functions $\lambda_j \in \Sn$, $j\in\N_0$, satisfying \eqref{eq:*}, \eqref{eq:**} and (instead of \eqref{eq:***})
$$\sum_{j=0}^\infty \lambda_j(x) \psi_j(x) = 1, \quad x \in\Rn.$$
So, now we are using $\psi_j$ in place of $\phi_j$ of the preceding theorem, observing that $\psi_0, \psi_1$ now satisfy the conditions assumed for $\phi_0, \phi_1$ there. With the $\lambda_j$'s above replacing the functions with the same names in the proof of \cite[Theorem 3.8]{Kempka09}, we can essentially follow that proof up to the derivation of formula (27) of that paper, where we can also replace the $a$ by the $M$ at our disposal.

For the next key estimate in this proof we prefer to follow now \cite{Ull12}, as the original approach of Rychkov \cite{Rychkov}, followed in \cite{Kempka09}, is known to have some issues solved in subsequent papers by the same author. This is well reported by Ullrich \cite{Ull12}, to which we refer the reader. So, adapting Subset 1.2 in the proof \cite[Theorem 2.6]{Ull12} to our context, we get, for every $t>0$ and every $N \in \N_0$,
$$|(\Psi_\nu\ast f)(y)|^t \leq c \sum_{j=0}^\infty 2^{-jNt}\, 2^{(j+\nu)n} \int_\Rn \frac{|(\Psi_{j+\nu}\ast f)(z)|^t}{(1+|2^\nu(y-z)|^N)^t}\, dz,$$
for all $y\in\Rn$,  where $c$ is independent of $f\in\SSn$, $\nu \in \N_0$ and $y\in\Rn$. This corresponds to \cite[(2.48)]{Ull12}. Then we can get, similarly as \cite[(2.66)]{Ull12}, for $0<a \leq N$, that
\begin{equation}
((\Psi^\ast_\nu f)_a(x))^t \leq c \sum_{j=0}^\infty 2^{-jNt}\, 2^{(j+\nu)n} \int_\Rn \frac{|(\Psi_{j+\nu}\ast f)(z)|^t}{(1+|2^\nu(x-z)|^a)^t}\, dz,
\label{eq:2.66}
\end{equation}
for all $x\in\Rn$,  where here $c$ is independent of $f\in\SSn$, $\nu \in \N_0$ and $x\in\Rn$.

Now we proceed as in the proof of \cite[Theorem 3.1]{AC16}. However, since the details are not given in the latter paper, for the convenience of the reader we write them down here: multiplying formula \eqref{eq:2.66} by $(w_\nu(x))^t$ and using the easy estimate
$$\frac{1}{1+|2^\nu(x-z)|^a} \leq \frac{2^{ja}}{1+|2^{j+\nu}(x-z)|^a} \approx \frac{2^{ja}}{(1+|2^{j+\nu}(x-z)|)^a}$$
and the estimate $w_\nu(x) \lesssim 2^{-j\alpha_1}w_{j+\nu}(z)(1+|2^{j+\nu}(x-z)|)^\alpha,$
which is an easy consequence of Definition \ref{weight}, we obtain for $a>\alpha$ that
\begin{align}
((\Psi^\ast_\nu f)_a(x))^t (w_\nu(x))^t &\notag\\ 
&\hspace{-5em}\lesssim  \sum_{j=0}^\infty 2^{-j(N-a+\alpha_1)t}\, 2^{(j+\nu)n} \int_\Rn \frac{|(\Psi_{j+\nu}\ast f)(z)|^t (w_{j+\nu}(z))^t}{(1+|2^{j+\nu}(x-z)|)^{(a-\alpha)t}}\, dz \notag \\
&\hspace{-5em} =  \sum_{j=\nu}^\infty 2^{-(j-\nu)(N-a+\alpha_1)t} \big(\eta_{j,(a-\alpha)t}\ast (|\Psi_j \ast f|w_j)^t\big)(x),\label{pointwiseestimate}
\end{align}
where the constant involved in the estimate above is independent of $f\in\SSn$, $\nu \in \N_0$ and $x\in\Rn$.

So, up to now we obtained pointwise estimates which have nothing to do with the consideration of the parameters $p, q, u$. Consider now\\ $a>\alpha+n\left(\frac{1}{\min(p^-,q^-)}+\cpu\right)$, $N> a+|\alpha_1|$ and $t\in (0,\min(p^-,q^-))$ such that still $a>\alpha+n\left(\frac{1}{t}+\cpu\right)$. Then applying first Lemma \ref{lem:Hardy} with $\delta:=(N-a+\alpha_1)t$ and afterwards Theorem \ref{thm:MorreyHardy} with $m:=(a-\alpha)t$ we get from \eqref{pointwiseestimate}
				\begin{align*}
				\norm{\left(((\Psi_\nu^*f)_a w_\nu)^t\right)_\nu}{M_{\frac\p t}^{\frac\u t}(\ell_{\frac\q t})}
				&\lesssim \norm{\left(\eta_{j,(a-\alpha)t}\ast (|(\Psi_{j}\ast
        f)|w_{j})^t\right)_j}{M_{\frac\p t}^{\frac\u t}(\ell_{\frac\q t})}\\
				&\lesssim \norm{\left((|(\Psi_{j}\ast
        f)|w_{j})^t\right)_j}{M_{\frac\p t}^{\frac\u t}(\ell_{\frac\q t})}.
				\end{align*}
The conclusion \eqref{eq:boundednessLM} now follows by application of Lemma \ref{lem:ttrick}.							
\end{proof}

{\em Proof of Theorem \ref{thm:lm}.}
Assume the hypotheses stated. 
The second estimate in \eqref{eq:lm} comes immediately from Theorem \ref{thm:boundednessLM} and the easy estimate $(\Psi^*_jf)_a(x)\geq|(\Psi_j\ast f)(x)|$.
Considering, besides $\psi_0, \psi_1$ as in Theorem \ref{thm:lm}, also $\phi_0, \phi_1$ satisfying the formulas corresponding to \eqref{LokaleMittelMomentCond}, \eqref{LokaleMittelTauber1,5} and \eqref{LokaleMittelTauber2,5}, possibly with different $\varepsilon$ and $k$ than the ones used for $\psi_0, \psi_1$, the estimate 
$$\norm{((\Psi_j\ast f)w_j)_j}{\Muplq} \approx \norm{((\Phi_j\ast f)w_j)_j}{\Muplq}$$
follows easily by application of Theorems \ref{thm:ComparisonLM} and \ref{thm:boundednessLM}. So, the proof of Theorem \ref{thm:lm} will be finished if one shows that each construction of $(\check{\varphi}_j)_{j \in \N_0}$ as in Definition \ref{def:TLM} coincides with one of the possibilities for $(\Phi_j)_{j\in\N_0}$ above (for some choice of $\varepsilon>0$ and $k\in(1,2]$). This is indeed the case: given an admissible pair $(\check{\varphi}, \check{\Phi})$ according to Definition \ref{dfn:admissiblePair}, define $\phi_0:=\varphi_0(-\,\cdot):=\Phi(-\,\cdot)$ and $\phi_1:=\varphi_1(-\,\cdot):=\varphi(2^{-1}(-\,\cdot))$; it is easy to see that this produces Schwartz functions satisfying \eqref{LokaleMittelMomentCond}, \eqref{LokaleMittelTauber1,5} and \eqref{LokaleMittelTauber2,5} with $\phi_0, \phi_1$ instead of $\psi_0, \psi_1$ and for $\varepsilon = \frac{6}{5} > 0$ and $k = \frac{25}{18} \in (1,2]$; and that, moreover, $\check{\varphi}_j = \Phi_j$ for all $j\in\N_0$.\qed

The last proof gives that the definition of the spaces $\Mufwpqx$ is independent of the chosen admissible system, therefore the spaces are well defined. For further properties we refer to our forthcoming paper \cite{PartII} where decompositions with atoms and molecules in this scale are treated.

\small{
}

\end{document}